\newcommand{\inv}{^{-1}}
\newtheorem{thm}{Theorem}[section]
\newtheorem{lemma}[thm]{Lemma}
\newtheorem{defn}[thm]{Definition}
\newtheorem{cor}[thm]{Corollary}
\newtheorem{exam}[thm]{Example}
\newtheorem{conj}[thm]{Conjecture}
\newtheorem{ques}[thm]{Question}
\newtheorem{rem}[thm]{Remark}
\begin{document}
\title[Counting Condorcet domains]{Counting Condorcet domains}

\author[G. Liversidge]{Georgina Liversidge}
\address[G. Liversidge]{Department of Mathematics,\\ University of Auckland\\
{\em gliv560@aucklanduni.ac.nz}\\
{\em Feb, 2020}}

\begin{abstract}
A Condorcet domain is a collection of linear orders which satisfy an acyclic majority relation. In this paper we describe domains as collections of directed Hamilton paths. We prove that while Black's single-peaked domains are defined by their extremal paths, Arrow's single-peaked domains are not. We also introduce  domain contractions and domain extensions as well as self-paired domains, and describe some properties of these. We give a formula for the number of isomorphism classes of Arrow's single-peaked domains in terms of the number of self-paired domains, and give upper and lower bounds on this number. We also enumerate the distinct maximal Arrow's single-peaked domains for $|A|=5,6,7, 8$. Finally, we show that all of the observations in this paper can be translated to single-dipped domains, that is, Condorcet domains with complete ``never-top" conditions.
\end{abstract}
\maketitle
\begin{section}{Introduction}

Let $A$ be a finite set and let $\mathcal{L}(A)$ be the set of all linear orders on $A$. A {\it Condorcet domain} on $A$ is a subset $D\subseteq \mathcal{L}(A)$ such that every profile composed of preferences from $D$ has an acyclic majority relation, that is, it does not contain a {\it Condorcet triple}
$$a_1\succ_{v_1} a_2\succ_{v_1} a_3,\>\>\>\> a_2\succ_{v_2} a_3\succ_{v_2} a_1,\>\>\>\> a_3\succ_{v_3} a_1\succ_{v_3} a_2,$$
as defined by M. J. Condorcet [3].
Condorcet domains have been studied extensively (see, for example [1-9], [13]), particularly  maximal Condorcet domains.
\begin{defn}
\label{defn:max}
{\em A Condorcet domain $D$ on a set $A$ is called} maximal {\em if there is no other Condorcet domain $D'$ with $D\subset D'$. In other words, if $v$ is any linear order of A such that $v\not\in D$, then $D\cup \{v\}$ contains a Condorcet triple.}
\end{defn}
It follows from Definition \ref{defn:max} that if a $D$ is maximal and $D\cup \{v\}$ does not contain a Condorcet triple, then $v\in D$.

A.K. Sen showed in [11] that if $D$ is a maximal Condorcet domain on $A$, then one of the following ``never conditions" holds for every triple $\{a,b,c\}\subseteq A$:
\begin{itemize}
\item ``Never-bottom": if $a$ is the ``never-bottom" element of the triple $\{a,b,c\}$ then the preferences $b\succ c\succ a$ and $c\succ b \succ a$ are not permitted in $D$.
\item ``Never-top": if $a$ is the ``never-top" element of the triple $\{a,b,c\}$ then the preferences $a\succ c\succ b$ and $a\succ b \succ c$ are not permitted in $D$.
\item ``Never-middle": if $a$ is the ``never-middle" element of the triple $\{a,b,c\}$ then the preferences $b\succ a\succ c$ and $c\succ a \succ b$ are not permitted in $D$.
\end{itemize}

A Condorcet domain may satisfy a mixture of these types of conditions, or there can be a single type that holds for every triple. Domains satisfying a ``never-bottom" condition on each triple are known as {\it Arrow's single-peaked domains}, after K.J. Arrow following his work in [1]. A specific type of these domains, defined below, was first described by D. Black in [2].

\begin{defn}
\label{defn:Black}
{\em A} Black's single-peaked domain {\em is a domain $D$ on a set $A$, such that there exists some ``societal axis", that is, a sequence $a_1>a_2>\dots >a_m$ of the elements of $A$, so that every order $v\in D$ has a ``peak" at some $a\in A$, with the property that for all $b,c\in A$,
\begin{itemize}
\item if $b<c\leq a$ then $c\succ_v b$ and
\item if $a\geq b>c$ then $b\succ_v c$. 
\end{itemize}
}
\end{defn}

The next two definitions, lemma, and theorem conveyed to the author by A. Slinko [12] lead to the question which is the main motivation for this paper.

\begin{defn}
{\em Let $D$ be a Condorcet domain on a set $A$. A} terminal element of $D$ {\em is an element $a\in A$ such that there exists a linear order $v\in D$ which ends with $a$.}
\end{defn}

\begin{lemma}{\em [12]}
\label{TermVertLemma}
An Arrow's single-peaked domain has at most two terminal elements. A maximal Arrow's single peaked domain has exactly two of them.
\end{lemma}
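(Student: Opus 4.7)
The plan is to argue each half by contradiction. For the first statement (at most two terminal elements), I would suppose that $a,b,c\in A$ are three distinct terminal elements, witnessed by orders $v_a,v_b,v_c\in D$ ending in $a,b,c$ respectively. Restricted to the triple $\{a,b,c\}$, these three orders put $a$, $b$, and $c$ each in the bottom position for some order, so no element of $\{a,b,c\}$ can serve as the never-bottom for this triple, contradicting the Arrow's single-peaked hypothesis.

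For the second claim, I would assume $|A|\geq 2$ and, aiming at a contradiction, suppose the maximal Arrow's single-peaked domain $D$ has only one terminal element $a$, so that every $v\in D$ ends with $a$. The strategy is to produce an order outside $D$ whose adjunction still yields an Arrow's single-peaked (hence Condorcet) domain, so that maximality of $D$ forces that order into $D$ and supplies a second terminal element. Concretely, I would pick any $v\in D$, write it as $v = b_1 \succ b_2 \succ \cdots \succ b_{m-1}\succ a$ with $m=|A|$, set $b := b_{m-1}$, and let $v'$ be obtained from $v$ by transposing its last two entries, so that $v'$ ends with $b\neq a$.

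The core step is then to check that $D\cup\{v'\}$ admits a never-bottom element on every triple, which is enough because a domain satisfying a never-bottom condition on every triple is automatically Condorcet (the three orders of a Condorcet triple realise every element in each of the top, middle and bottom positions of that triple, so any never-condition rules them out). For a triple $T$ not containing both $a$ and $b$, the restriction $v'|_T$ agrees with $v|_T$, so the never-bottom element already present in $D$ carries over. For a triple $T=\{a,b,c\}$ the situation is new: every order of $D$ puts $a$ at the bottom of $T$ (because it ends with $a$ overall), while $v'$ puts $b$ there; hence $c$ is never at the bottom of $T$ in $D\cup\{v'\}$ and serves as the required witness. I expect this triple-by-triple verification to be the main obstacle; the key insight is to transpose $a$ with its \emph{immediate} predecessor, so that the perturbation only affects triples of the form $\{a,b,c\}$ and the third element $c$ is automatically available to absorb the never-bottom role vacated by $a$.
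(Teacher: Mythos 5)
Your proof is correct. Note that the paper does not actually prove this lemma: it is quoted from Slinko's seminar [12] and stated without argument, so there is no in-text proof to compare against, and your write-up supplies a complete one. Both halves are sound. For the upper bound, three terminal elements $a,b,c$ would each occupy the bottom position of the contraction onto $\{a,b,c\}$ in some order of $D$, leaving no candidate for the never-bottom element of that triple and contradicting the definition of an Arrow's single-peaked domain (this half needs no maximality). For the second half, your choice to transpose $a$ with its \emph{immediate} predecessor $b$ is exactly what makes the verification local: the restriction of $v'$ to any triple not containing both $a$ and $b$ coincides with that of $v$, so the existing never-bottom witnesses persist, while on a triple $\{a,b,c\}$ the third element $c$ is forced to be never-bottom because orders of $D$ only ever place $a$ at the bottom there and $v'$ only places $b$. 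Hence $D\cup\{v'\}$ satisfies a never-bottom condition on every triple, is therefore a Condorcet domain, and maximality forces $v'\in D$, yielding a second terminal element. The only points you leave implicit are trivial: the degenerate cases $|A|\le 2$, and the exclusion of zero terminal elements (a maximal domain is nonempty, so at least one exists).
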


\begin{defn}
{\em A linear order which starts with one terminal element and ends at the other is called} extremal.
\end{defn}

\begin{thm}{\em [12]}
\label{ExtremalTheorem}
Any maximal Arrow's single-peaked domain on a set of size $m$ must contain exactly $2^{m-1}$ linear orders, two of which must be extremal.
\end{thm}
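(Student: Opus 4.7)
My plan is to induct on $m = |A|$. The base case $m = 2$ is immediate: $D = \mathcal{L}(A)$ has two elements, both extremal. For $m \geq 3$, apply Lemma~\ref{TermVertLemma} to identify the two terminal elements $x, y$ of $D$, partition $D = D_x \sqcup D_y$ by the last element, and let $D'_x \subset \mathcal{L}(A \setminus \{x\})$ and $D'_y \subset \mathcal{L}(A \setminus \{y\})$ be obtained by deleting that last letter. Each is in bijection with $D_x$ (respectively $D_y$) and, by inheritance of never-bottom conditions on smaller triples, is Arrow's single-peaked.

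The heart of the argument is showing that $D'_x$ is itself maximal. Given $v' \notin D'_x$, the order $v = v' \cdot x$ is not in $D$, so by maximality of $D$ it violates the never-bottom condition on some triple $T$. If $x \in T$, the never-bottom element of $T$ cannot be $x$ (since $x$ is terminal in $D$, some order of $D$ places $x$ at the bottom of $T$); but $v$ ends with $x$, so $x$ is at the bottom of $T$ in $v$ as well, and the designated never-bottom element, sitting above $x$ in $v$, is not at the bottom, contradicting the supposed violation. Hence $T \subseteq A \setminus \{x\}$, and $v'$ inherits the violation, so $D'_x$ is maximal. By induction $|D'_x| = 2^{m-2}$, and likewise for $D'_y$, giving $|D| = 2 \cdot 2^{m-2} = 2^{m-1}$.

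For the two extremal orders, I would first show that $y$ is a terminal of $D'_x$ (and symmetrically $x$ a terminal of $D'_y$): if both terminals of $D'_x$ were some $p, q \in A \setminus \{x, y\}$, then some order of $D_x$ would place $p$ at the bottom of the triple $\{p, q, y\}$ (namely one whose restriction in $D'_x$ ends at $p$), another would place $q$ there, and any order of $D_y$ would place $y$ there, so no element of this triple would be never-bottom in $D$, contradicting that $D$ is Arrow's single-peaked. By the inductive hypothesis, $D'_x$ has a unique extremal from $y$ to its other terminal $z$, which upon appending $x$ is an extremal of $D$ from $y$ to $x$; and it is the only one, because any competitor restricts to an order of $D'_x$ beginning with $y$, which must end at $z$ (the other terminal) and so must agree with this unique extremal of $D'_x$. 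A symmetric argument via $D'_y$ produces the extremal from $x$ to $y$. I expect the maximality transfer and the identification of $y$ as a terminal of $D'_x$ to be the main subtleties; the remaining bookkeeping flows cleanly from the induction.
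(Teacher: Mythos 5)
The paper itself records no proof of this theorem (it is quoted from [12]); the closest it comes is the combination of Lemma~\ref{ContractionLemma} (also quoted without proof, from [13]) with the $j=m$ case of Lemma~\ref{CountingLemma}. Your decomposition $D=D_x\sqcup D_y$ by terminal element and the claim that the contraction $D'_x$ is again a maximal Arrow's single-peaked domain is precisely Lemma~\ref{ContractionLemma}, so your route is the intended one; the issue is that your proof of the key maximality transfer has a gap.

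You correctly show that for $v'\notin D'_x$ the order $v=v'\cdot x$ must violate the never-bottom condition of $D$ on some triple $T$ with $x\notin T$, so that $v'$ ``inherits the violation.'' But violating a never-bottom condition of $D'_x$ does not by itself mean that $D'_x\cup\{v'\}$ contains a Condorcet triple: for that you need the two cyclic partners of $v'|_T$ to actually occur among the restrictions of orders of $D'_x$ to $T$. The inference ``every outside order violates some never-condition, hence the domain is maximal'' is invalid in general --- on $\{a,b,c,d\}$, assigning never-bottom elements $a,b,c,d$ to the four triples respectively yields the \emph{empty} set of admissible orders, which certainly violates a condition at every outside order yet is not maximal. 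What is missing is the observation that $D|_T=D_x|_T$ for every triple $T\not\ni x$, so that the Condorcet triple which maximality of $D$ produces in $D\cup\{v\}$ can be realised with partners from $D_x$ and hence descends to $D'_x\cup\{v'\}$. This follows, for instance, from the fact that moving $x$ to the last position of any $W\in D$ keeps it in $D$ (the $k=m$ case of Lemma~\ref{ShuffleLemma}; since that lemma's proof in the paper relies on Lemma~\ref{ContractionLemma}, you would need to argue it directly: the modified order agrees with $W$ on every triple avoiding $x$ and puts $x$ at the bottom of every triple containing $x$, where $x$ is never the never-bottom element, so it satisfies all of $D$'s conditions and lies in $D$ by maximality). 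The rest of your argument --- identifying $y$ as a terminal of $D'_x$ via the triple $\{p,q,y\}$ and the uniqueness of the extremal order in each direction --- is sound, although you should strengthen the induction hypothesis to ``exactly one extremal order from each terminal to the other,'' since the bare phrase ``two of which must be extremal'' does not immediately license the uniqueness you invoke for $D'_x$.
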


Note that it follows from Definition \ref{defn:Black} that a maximal Black's single-peaked Condorcet domain must contain two linear orders which are mutually reverse - that is, if $P=a_1a_2\dots a_{m-1}a_m$ is one extremal order, then $Q=a_ma_{m-1}\dots a_2a_1$ must be the other. Furthermore, a Black's single-peaked domain is uniquely determined by it's extremal orders. A. Slinko posed the following question in [12]:
\begin{ques}
\label{q:main}
Given two extremal orders $P$ and $Q$, is there a unique Arrow's single-peaked Condorcet domain containing $P$ and $Q$?
\end{ques}
This question motivated the author to study Condorcet domains, and show that the answer is, in fact, no. This then raises the follow up question: how many maximal Arrow's single-peaked domains are? Counting the number of isomorphism classes of maximal Arrow's single-peaked domains is the main focus of this paper, and we significantly reduce this problem to enumerating only self-paired domains.

First, in Section \ref{S:notation}, we outline some notation and begin describing domains as collections of directed Hamilton paths. We also introduce domain contractions and simplified domain contractions.

In Section \ref{S:conj} we show that the answer to Question \ref{q:main} is no: there may be multiple Arrow's single-peaked domains for some pairs of extremal paths. We show this by giving an example on a set of size 6. 

In Section \ref{S:cont_ext} we further explore domain contractions. We introduce domain extensions and provide some of their properties.

In Section \ref{S:iso_count} we introduce inherited permutations and provide necessary conditions for two maximal Arrow's single-peaked domains to be isomorphic. We introduce the concept of self-paired domains and show their importance in counting maximal Arrow's single-peaked domains. We also give some necessary conditions for a domain to be self-paired and give bounds for the total number of non-isomorphic maximal Arrow's single-peaked domains on a set of size $m$.

In Section \ref{S:theta_count} we enumerate the isomorphism classes of maximal Arrow's single-peaked domains with some particular inherited permutations.

In Section \ref{S:small_sets} we give some data on the isomorphism classes maximal Arrow's single peaked domains on sets of size 5, 6, 7 and 8.

In Section \ref{S:Sdip} we introduce single-dipped domains, and give some properties of these. We also give a conjecture and suggestions for future work.

Finally in Section \ref{S:append} we provide details of the distinct maximal Arrow's single-peaked domains for sets of size 5 and 6.

\end{section}

\begin{section}{Domains as collections of Hamilton directed paths}
\label{S:notation}

Throughout this paper we use $id$ to denote the identity permutation and we use the convention of writing permutations in cyclic notation, without the use of commas. For example $(abc)$ denotes the permutation $a \mapsto b\mapsto c\mapsto a$. In contrast, we write paths with commas, so that $(a,b,c)$ denotes the path through the vertices $a,$ $b,$ and $c$, in that order. Given a path $P$, we write $(P,x)$ for the path given by appending $P$ with the vertex $x$. Similarly, we write $(x,P)$ for the path given by $P$ prefixed with $x$. Finally, we denote the position of the vertex $a$ in the path $P$ by pos$_P(a)$, where the first vertex is in position 1. 

Let $A$ be a finite set and let $V$ be a set of $|A|$ vertices each labelled with a different element of $A$. A linear order on $A$ defines a Hamilton directed path through $V$ in the obvious way. Thus a domain on $A$ may be viewed as a collection of Hamilton directed paths thorugh $V$. For simplificity, we will simply refer to this as a Hamilton directed path through $A$. Throughout this paper we distinguish between directed Hamilton paths by showing them in different colours. 

\begin{defn}
{\em Let $A$ be a finite set, and let $D$ be a collection of Hamilton directed paths through $A$. We define the} domain contraction {\em of $D$ on a subset $S\subseteq A$ to be the set $D(S)$ of Hamilton directed paths through $S$, where for each Hamilton directed path $H\in D$ we define $H'$ in $D(S)$ through $S$ by simply removing the vertices in $A\backslash S$. Let $D'(S)$ be a subset of $D(S)$ obtained by deleting any repeated paths, and call this the} simplified domain contraction {\em of $D$ on $S$.}
\end{defn}

With this definition in mind, we can redefine the Condorcet triple condition in terms of this new configuration.
\begin{lemma}
Let $A$ be a finite set and $D$ a collection of Hamilton directed paths through $A$. $D$ defines a Condorcet domain on $A$ if and only if for every subset $S\subseteq A$ of size 3, the simplified domain contraction $D'(S)$ does not contain a {\it double cycle} (that is, three paths as shown in Figure \ref{fig:DCycle}).
\begin{figure}[h]
\centering
\includegraphics[scale=0.5]{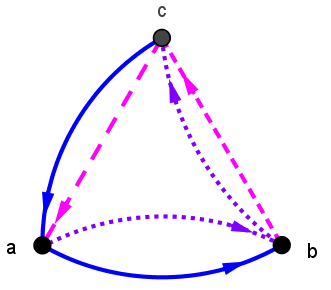}
\caption{Double cycle on three elements}
\label{fig:DCycle}
\end{figure}
\end{lemma}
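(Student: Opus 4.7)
The plan is to prove this as a translation lemma: a double cycle on three vertices in $D'(S)$ is nothing other than the graphical encoding of a Condorcet triple on $S$, so the biconditional essentially reduces to unpacking the definitions. To make this precise, I would first note that the three directed Hamilton paths on a $3$-element set $S=\{a_1,a_2,a_3\}$ forming the double cycle of Figure 2.1 are exactly $(a_1,a_2,a_3)$, $(a_2,a_3,a_1)$ and $(a_3,a_1,a_2)$, which correspond to the three cyclic linear orders appearing in the definition of a Condorcet triple.

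For the forward direction I would argue the contrapositive. Suppose some simplified contraction $D'(S)$ contains a double cycle on a triple $S=\{a_1,a_2,a_3\}$. Then, by definition of $D'(S)$, there exist three Hamilton directed paths $v_1,v_2,v_3\in D$ whose restrictions to $S$ are the three paths of the double cycle. Viewing $v_1,v_2,v_3$ as a profile of preferences from $D$, the restriction property gives exactly $a_1\succ_{v_1}a_2\succ_{v_1}a_3$, $a_2\succ_{v_2}a_3\succ_{v_2}a_1$, and $a_3\succ_{v_3}a_1\succ_{v_3}a_2$, a Condorcet triple, so $D$ is not a Condorcet domain.

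For the backward direction, again via the contrapositive, suppose $D$ is not Condorcet. Then some profile from $D$ contains a Condorcet triple, witnessed by linear orders $v_1,v_2,v_3\in D$ and elements $a_1,a_2,a_3\in A$. Restricting each $v_i$ to $S=\{a_1,a_2,a_3\}$ yields the three directed paths $(a_1,a_2,a_3)$, $(a_2,a_3,a_1)$, $(a_3,a_1,a_2)$, which are pairwise distinct and therefore all survive as distinct elements of the simplified contraction $D'(S)$. Together they form a double cycle on $S$.

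This is essentially a bookkeeping argument, so there is no serious mathematical obstacle. The only point that requires a moment of care is verifying that the three cyclic restrictions are pairwise distinct, so that simplifying the contraction (removing repeated paths) does not destroy the double cycle; this is immediate from the fact that the three listed orders on $\{a_1,a_2,a_3\}$ differ in, for example, which element is first. I would also take care to note that a profile may repeat preferences, but this is harmless: the definition of a Condorcet triple only requires the existence of three orders of the stated forms in $D$, and conversely we are free to build a profile from any three elements of $D$.
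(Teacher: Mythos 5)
Your proposal is correct and follows essentially the same route as the paper, which simply identifies the three paths of the double cycle with the three cyclic orders $b\succ c\succ a$, $c\succ a\succ b$, $a\succ b\succ c$ of a Condorcet triple and asserts the equivalence. Your version just spells out the two contrapositive directions and the (easy) check that the three restricted paths are distinct, so simplification does not remove them.
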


It is clear to see that the pink path is the preference $b\succ c\succ a$, the blue path is $c\succ a\succ b$, and the purple path is $a\succ b\succ c$, which gives a Condorcet triple. Thus a Condorcet triple will be present if and only if a double cycle is present, as required. In fact, a simplified domain contraction on a subset of size three must be a subset of one of the path collections of the three graphs shown in Figure \ref{fig:triples}. 

Each graph corresponds to a ``never" condition on a triple:
\begin{enumerate}
\item $a$ is ``never-top" in Graph 1 of Figure \ref{fig:triples},
\item $a$ is ``never-middle" in Graph 2 of Figure \ref{fig:triples}, and
\item $a$ is ``never-bottom" in Graph 3 of Figure \ref{fig:triples}.
\end{enumerate}
\begin{figure}[h]
\centering
\includegraphics[scale=0.2]{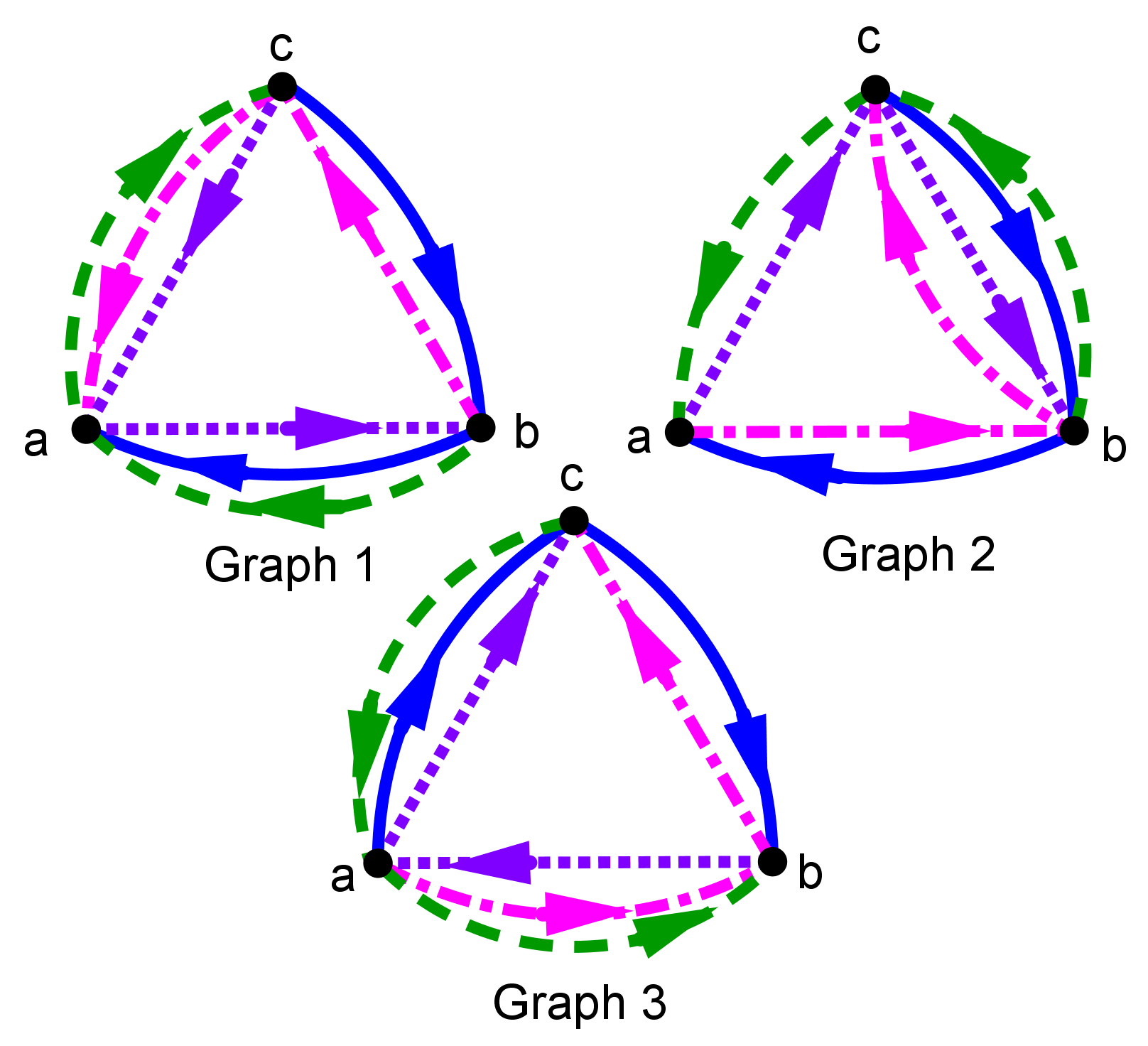}
\caption{Possible domain contractions on $\{a,b,c\}$}
\label{fig:triples}
\end{figure}

 Thus we can conclude that in order for a domain to be an Arrow's single-peaked domain, the simplified domain contraction on any three elements must be a subset of the path collection shown in Graph 3 of Figure \ref{fig:triples}. 
\end{section}

\begin{section}{Slinko's Question}
\label{S:conj}

A. Slinko in [12] posed Question \ref{q:main}: Given two extremal paths $P$ and $Q$ on a set $A$, do they uniquely define a maximal Arrow's single-peaked domain? It has been shown [10] that this is true of mutually reverse paths, that is, for Black's single-peaked domains, but, in general, the answer is no. We now prove that for a set of size 6 there may be multiple non-isomorphic maximal Arrow's single-peaked domains on a given pair of extremal paths.

\begin{exam}
\label{eg1}
{\em Let $A=\{a,b,c,d, s,f\}$ and define the Hamilton directed paths $P=(s,a,b,c,d,f)$ and $Q=(f,a,b,c,d,s)$, as shown in Figure \ref{fig:eg1}. These must be extremal paths for any domain $D$ which contains them, so any paths in $D$ must end in $s$ or $f$. 

\begin{figure}[h]
\centering
\includegraphics[scale=0.5]{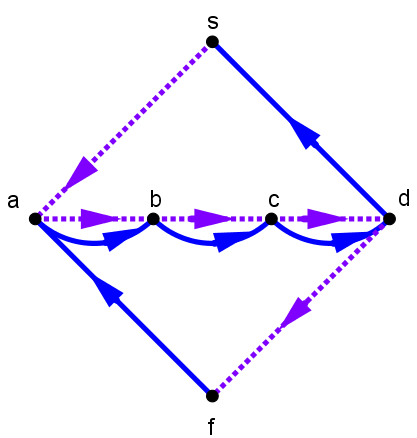}
\caption{Path $P$ shown in purple dotted lines, path $Q$ shown in blue.}
\label{fig:eg1}
\end{figure}

In the domain contraction on $\{a,b,c,d\}$, $P$ and $Q$ are equal, which gives us a level of freedom in choosing how other paths will behave on this contraction. There are, up to isomorphism, two distinct Arrow's single-peaked Condorcet domains on 4 elements. Suppose we decide that $(a,b,c,d)$ will be one extremal path, then we have two choices for the second, either the {\it twisted case} $(d,b,c,a)$ or the {\it reversed case} of $(d,c,b,a)$. In each case we have 8 possible paths on the domain contraction, and we consider how we can complete them to get paths on the original set. First, every path must end in either $s$ or $f$, so we consider those ending in $f$ and look at the possible position of $s$ in both cases. To this end, we consider the domain contractions of $P$ and $Q$, as shown in Figure \ref{fig:eg2}.
\begin{figure}[h]
\centering
\includegraphics[scale=0.3]{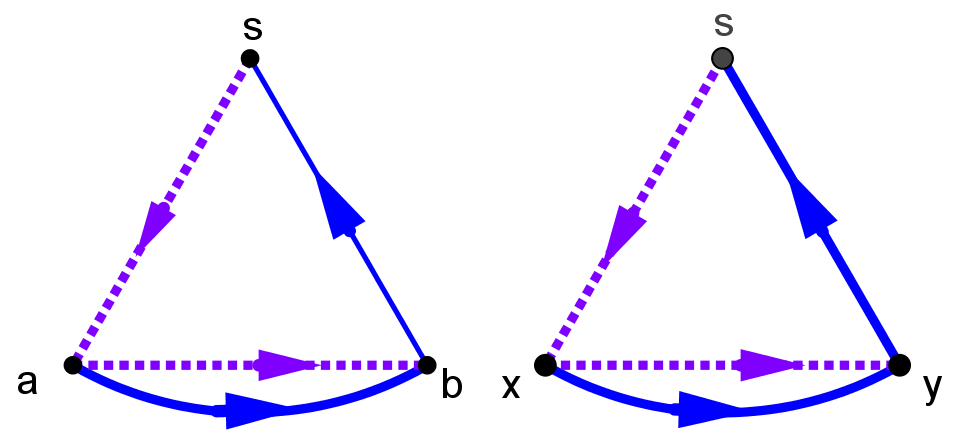}
\caption{Some domain contractions of $P$ and $Q$.}
\label{fig:eg2}
\end{figure}

Now, for a maximal Arrow's single-peaked domain, the full domain must be isomorphic to Graph 3 in Figure \ref{fig:triples}. Thus our allowed paths are $(s,a,b)$, $(a,b,s)$, $(a,s,b)$ and $(b,a,s)$, and the disallowed paths are $(s,b,a)$ and $(b,s,a)$. Similarly, for all other contractions of the form $\{s,x,y\}$ with $x,y\in \{a,b,c,d\}$, and $x$ alphabetically before $y$, the subpaths $(s,y,x)$ and $(y,s,x)$ are disallowed. With these rules in mind, we give all possible linear orders ending in $f$ for the maximal domain in both the twisted case and the reversed case, in the table below. To get the orders ending in $s$, simply swap $s$ and $f$ in the given orders.

\begin{center}
\begin{tabular}{|c|c|c|}
\hline
&Twisted case&Reversed case\\
\hline
1&$sabcdf$&$sabcdf$\\
2&$asbcdf$&$asbcdf$\\
3&$abscdf$&$abscdf$\\
4&$abcsdf$&$abcsdf$\\
5&$abcdsf$&$abcdsf$\\
6&$bascdf$&$bascdf$\\
7&$bacsdf$&$bacsdf$\\
8&$bacdsf$&$bacdsf$\\
9&$bcasdf$&$bcasdf$\\
10&$bcadsf$&$bcadsf$\\
11&$cbasdf$&$cbasdf$\\
12&$cbadsf$&$cbadsf$\\
13&$bcdasf$&$bcdasf$\\
14&$cbdasf$&$cbdasf$\\
15&$bdcasf$&$cdbasf$\\
16&$dbcasf$&$dcbasf$\\
\hline
\end{tabular}
\end{center}
Note that the orders given are the same in both cases, except for orders 15 and 16, so clearly there is no isomorphism between the twisted case and the reversed case. With the addition of the orders ending in $s$, we get a total of 32 orders in each case, as required for a maximal Arrow's single-peaked domain.}
\end{exam}

From this example we know that there may be more than one isomorphism class of maximal Arrow's single-peaked domains for a pair of extremal paths. The natural next question to ask is: how many are there for a given $m$? Or perhaps; how many are isomorphism classes are there for a given pair of extremal paths?  We explore these questions throughout the rest of the paper, and give the answers for $m\leq 8$ (in Section \ref{S:small_sets}) and some families of extremal paths (in Section \ref{S:theta_count}).
\end{section}

\begin{section}{Contraction and Extension of Arrow's single-peaked domains}
\label{S:cont_ext}
We begin by giving a lemma from A. Slinko [13], transcribed into the language of domain contractions.

\begin{lemma}[13]
\label{ContractionLemma}
Let $D$ be a maximal Arrow's single-peaked domain on $A$, with terminal vertices $\{a_1, a_2\}$. If $D_i$ is the set of Hamilton directed paths in $D$ which end in $a_i$ for $i\in \{1,2\}$ then the simplified domain contraction $D'_i$ of $D_i$ on $A\backslash\{a_i\}$ is a maximal Arrow's single-peaked domain, for $i\in \{1,2\}$.  Furthermore, if
$$\hat{D_1}=\{H\in D|{\rm pos}(a_1)=m, {\rm pos}(a_2)=m-1\} \subseteq D_1$$
$${\rm and}\>\>\hat{D_2}=\{H\in D|{\rm pos}(a_2)=m, {\rm pos}(a_1)=m-1\} \subseteq D_2$$
then there exists an isomorphism $\phi$ between $\hat{D_1}$ and $\hat{D_2}$ such that $\phi(a_1)=a_2$, $\phi(a_2)=a_1$, and $\phi(a)=a$ for all $a\in A\backslash\{a_1,a_2\}$.
\end{lemma}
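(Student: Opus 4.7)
The proof splits into two parts, matching the two conclusions, and both rest on a single structural observation: a terminal element $a_i$ cannot be the never-bottom element of any triple $\{a_i, y, z\}$ that contains it, because some path in $D$ ends at $a_i$, placing $a_i$ at the bottom of this triple.

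For the first part (that $D'_i$ is a maximal Arrow's single-peaked domain on $A\setminus\{a_i\}$), I would proceed in three steps. First, the contraction map $H\mapsto H\setminus\{a_i\}$ is injective on $D_i$, since every path there has $a_i$ in the final position and is reconstructed by reappending $a_i$. Second, $D'_i$ inherits the Arrow's single-peaked property: for each triple $\{x,y,z\}\subseteq A\setminus\{a_i\}$, the never-bottom element for $D$ remains never-bottom in $D'_i$, because the orderings of $\{x,y,z\}$ arising from $D_i$ are a subset of those arising from $D$. Third, for maximality I argue by counting. Since Lemma \ref{TermVertLemma} gives exactly two terminal elements, $D_1$ and $D_2$ partition $D$, so $|D_1|+|D_2|=|D|=2^{m-1}$ by Theorem \ref{ExtremalTheorem}. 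Each $D'_i$ is Arrow's single-peaked on an $(m-1)$-element set, hence has at most $2^{m-2}$ paths by the same theorem, so $|D_i|=|D'_i|\leq 2^{m-2}$. The two bounds combine to force $|D_i|=2^{m-2}$, and an Arrow's single-peaked domain achieving this maximum size must be maximal.

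For the second part, I would take $\phi$ to be the transposition of $A$ swapping $a_1\leftrightarrow a_2$ and fixing all remaining elements, which induces a map on paths by relabeling. Given $H=(h_1,\dots,h_{m-2},a_2,a_1)\in\hat{D_1}$, its image $\phi(H)=(h_1,\dots,h_{m-2},a_1,a_2)$ is a Hamilton path through $A$ ending in $a_2$ with $a_1$ in position $m-1$. The core claim is that $\phi(H)\in D$; from this $\phi(H)\in\hat{D_2}$ follows immediately from the positions of $a_1$ and $a_2$. I would verify that $D\cup\{\phi(H)\}$ is Arrow's single-peaked and then invoke maximality of $D$. For any triple not containing both $a_1$ and $a_2$, the orderings induced by $H$ and $\phi(H)$ coincide, so the never-bottom condition is inherited from $D$. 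For a triple $\{a_1,a_2,z\}$, the structural observation forces the never-bottom element of $D$ to be $z$, and since $z$ occupies a position at most $m-2$ in $\phi(H)$, it sits at the top of this triple's contraction, safely away from the bottom. Applying the symmetric argument to $\phi^{-1}=\phi$ shows $\phi$ is a bijection $\hat{D_1}\to\hat{D_2}$ with the claimed action on $A$.

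The main obstacle is identifying the right framing; once the structural observation about terminal elements is in hand, together with the counting argument that pins down $|D_i|=2^{m-2}$, the triple-by-triple verifications of the never-bottom condition reduce to routine bookkeeping.
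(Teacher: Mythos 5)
The paper does not actually prove this lemma --- it is imported verbatim from Slinko's paper [13] --- so there is no in-paper argument to compare yours against; I can only assess your proposal on its own terms. Your second part is correct and clean: the observation that a terminal vertex can never be the ``never-bottom'' element of any triple containing it forces the never-bottom element of $\{a_1,a_2,z\}$ to be $z$, the restrictions of $H$ and $\phi(H)$ agree on every triple not containing both $a_1$ and $a_2$, and maximality of $D$ (in the sense recorded after Definition \ref{defn:max}) then puts $\phi(H)$ into $D$ and hence into $\hat{D_2}$. Likewise, injectivity of the contraction on $D_i$ and the inheritance of the never-bottom conditions by $D'_i$ are fine.

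The gap is in your maximality argument for $D'_i$. Theorem \ref{ExtremalTheorem} counts the orders in a \emph{maximal} Arrow's single-peaked domain; you use it twice for domains not yet known to be maximal. First, ``$D'_i$ is Arrow's single-peaked on $m-1$ elements, hence has at most $2^{m-2}$ paths by the same theorem'' requires knowing that every Arrow's single-peaked domain sits inside a maximal \emph{Arrow's single-peaked} domain --- extending to an arbitrary maximal Condorcet domain does not suffice, since that extension need not be Arrow's single-peaked and maximal Condorcet domains can exceed $2^{m-2}$ orders. (This particular bound can be rescued by a direct induction using Lemma \ref{TermVertLemma} and the same endpoint decomposition you are proving things about.) Second, and more seriously, ``an Arrow's single-peaked domain achieving this maximum size must be maximal'' does not follow from the size count: a priori $D'_i$ of size $2^{m-2}$ could still be extended by a single order that destroys the never-bottom structure on some triple while creating no Condorcet triple, and ruling this out is essentially the structural content of [13] (that a complete consistent family of never-bottom conditions determines a maximal Condorcet domain of exactly $2^{k-1}$ orders). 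As written, your argument silently assumes the very facts the citation to [13] is meant to supply; you should either prove that every Arrow's single-peaked domain extends to a maximal one of the same kind, or argue maximality of $D'_i$ directly (e.g., show that any order $v$ with $D'_i\cup\{v\}$ Condorcet-triple-free lifts, by appending $a_i$, to an order addable to $D$, and invoke maximality of $D$).
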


The following lemma is a consequence of the above.
\begin{lemma}
\label{CountingLemma}
Let $D$ be a maximal Arrow's single-peaked domain on a set of size $m$. If $a_1$ and $a_2$ are the terminal vertices of $D$ then for $2\leq j\leq m,$ and $i\in \{1,2\}$, there are $2^{j-2}$ paths in $D$ with $a_i$ in the $jth$ position, and one path with $a_i$ in the first position.
\end{lemma}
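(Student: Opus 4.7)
The plan is to prove the lemma by induction on $m$, with Lemma \ref{ContractionLemma} and Theorem \ref{ExtremalTheorem} as the main tools.

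The base case $m = 2$ is immediate: $D = \{(a_1, a_2), (a_2, a_1)\}$, so each terminal appears in position $1$ exactly once and in position $2 = 2^0$ exactly once, as required.

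For the inductive step, assume the lemma for domains on $m-1$ elements, and let $D$ be a maximal Arrow's single-peaked domain on $m$ elements with terminals $a_1, a_2$. By Lemma \ref{ContractionLemma}, the contractions $D_1'$ and $D_2'$ are themselves maximal Arrow's single-peaked on $m-1$ elements; and since removing the common last vertex $a_i$ from the paths of $D_i$ is a bijection onto $D_i'$, Theorem \ref{ExtremalTheorem} gives $|D_i| = |D_i'| = 2^{m-2}$. This settles the $j = m$ case because $|D_i|$ is exactly the number of paths of $D$ with $a_i$ in position $m$. For $1 \leq j < m$, any path with $a_i$ in position $j$ cannot end in $a_i$ and so lies in $D_{3-i}$; the contraction that removes $a_{3-i}$ (always in position $m$ of paths in $D_{3-i}$) preserves all earlier positions, so the count reduces to the number of paths of $D_{3-i}'$ with $a_i$ in position $j$. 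Provided $a_i$ is a terminal of $D_{3-i}'$, the inductive hypothesis applied to $D_{3-i}'$ delivers the required $2^{j-2}$ for $2 \leq j \leq m-1$ and $1$ for $j = 1$, completing the induction.

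The hard part is the sub-claim that $a_i$ is a terminal of $D_{3-i}'$. My plan is to argue by contradiction: a failure would yield, by Sen's theorem together with Lemma \ref{TermVertLemma} applied to $D_{3-i}'$, a triple $T = \{a_i, x, y\} \subseteq A \setminus \{a_{3-i}\}$ on which $a_i$ is never-bottom in the restriction of $D_{3-i}$ to $T$. Since $a_i$ is terminal in $D$, it is not never-bottom on $T$ in $D$, so the $D$-never-bottom element of $T$ must be $x$ or $y$. Combining these two constraints with the decomposition of the restriction of $D$ to $T$ as the union of the restrictions of $D_1$ and $D_2$ to $T$ would force this restriction to contain strictly fewer than four paths, contradicting the structural fact that the restriction of a maximal Arrow's single-peaked domain to any triple realises the full four-path configuration of Graph 3 in Figure \ref{fig:triples} --- a fact itself provable by a short separate induction using Lemma \ref{ContractionLemma} (easy when $T$ avoids a terminal of $D$, and handled via the two extremal paths plus one further path in $\hat{D_1}$ or $\hat{D_2}$ when $T$ contains both terminals). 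Equivalently, the sub-claim is the nonemptiness of $\hat{D_{3-i}}$, which by the isomorphism $\phi$ in Lemma \ref{ContractionLemma} equals the nonemptiness of $\hat{D_i}$; either phrasing provides a workable angle of attack.
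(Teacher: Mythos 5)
Your argument is essentially the paper's: induction on $m$, with Lemma \ref{ContractionLemma} giving the count $2^{m-2}$ for $j=m$, and the position-preserving contraction of $D_{3-i}$ onto $A\backslash\{a_{3-i}\}$ handling $j<m$ via the inductive hypothesis. The only divergence is that the paper simply asserts the sub-claim you spend most of your effort on, namely that $a_i$ is a terminal vertex of $D'_{3-i}$; your contradiction sketch for it is workable, but two points need tightening. First, the contradiction must be extracted from the contracted domain, not from $D$: if $a_i$ were never-bottom on $T=\{a_i,x,y\}$ in $D'_{3-i}$ while, say, $x$ is never-bottom on $T$ in $D$, then it is the restriction of $D'_{3-i}$ to $T$ that is forced to at most two paths, whereas the restriction of $D$ to $T$ can still realise all four paths of the $x$-configuration as the union of the restrictions of $D_i$ and $D_{3-i}$. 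Second, the implication ``not a terminal vertex $\Rightarrow$ never-bottom on some triple'' does not follow directly from Sen's theorem and Lemma \ref{TermVertLemma}; it needs its own short maximality argument (e.g.\ moving such an element to the end of an extremal path violates no never-bottom condition, so by maximality the resulting order lies in the domain).
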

\begin{proof}
We proceed by induction on $m$, with the base case $m=3$. Here we refer back to Graph 3 of Figure \ref{fig:triples} and we concern ourselves with the terminal vertex $b$ which appears in position 3 in two paths, position 2 in one path and position 1 in the other. The same is true for the other terminal vertex $c$. Hence, the claim of Lemma \ref{CountingLemma} is true for $m=3$.
Now assume this claim is true for $m-1$, and let $D$ be a maximal Arrow's single-peaked domain on  a set $A$ of size $m$, with terminal vertices $\{a_1, a_2\}$. Without loss of generality we prove the claim for $a_1$. By Lemma \ref{ContractionLemma}, the paths ending in $a_i$ form a maximal Arrow's single peaked domain $D_i$ on $A\backslash\{a_i\}$, for $i\in\{1,2\}$. Hence there are $2^{m-2}$ paths ending in $a_1$, as required, for $j=m$. Furthermore, $a_1$ is a terminal vertex in $D'_2$, which is a domain on $m-1$ elements, and by induction, the claim holds in this domain. Hence, in $D'_2$ there are $2^{j-2}$ paths with $a_1$ in position $j$, for $2\leq j\leq m-1$, and one path with $a_1$ in position 1. However, each path is in one-to-one correspondence with a path in $D$, and this completes the proof.
\end{proof}

Note that for a given pair of extremal paths $P$ and $Q$, the number of non-equal, but possibly isomorphic, maximal Arrow's single-peaked domains with $P$ and $Q$ as extremal paths must be a power of two. This is due to the fact that for each triple the ``never-bottom" element is either set or there are two options for it. \\

\begin{defn}
{\em Let $D$ be a Condorcet domain on $A$. A} domain extension {\em of $D$ by $x$ is a Condorcet domain $E$ on $A\cup \{x\}$, such that the simplified domain contraction $E'(A)$ is equal to $D$.}
\end{defn}

\begin{lemma}
\label{L:Ext}
If $D$ is an Arrow's single peaked domain on a set $A$ of size $m$ then there are $2^{m-1}$ ways to extend $D$ to a maximal Arrow's single-peaked domain $E$ on $A\cup \{x\}$ in such a way that $x$ is a terminal vertex in $E$.
\end{lemma}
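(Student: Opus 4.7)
Plan: I would prove this by induction on $m$. The base case $m=1$ is immediate, since then $D=\{(a)\}$ admits only the unique extension $E=\{(a,x),(x,a)\}$, giving $2^{m-1}=1$. For the inductive step I would first observe that any maximal $E$ on $A\cup\{x\}$ with $E'(A)=D$ already forces $D$ itself to be maximal (so $|D|=2^{m-1}$): the set $E_x\subseteq E$ of paths ending in $x$, upon contraction to $A$, is a maximal Arrow's single-peaked domain $E_x'$ by Lemma \ref{ContractionLemma}, contained in $E'(A)=D$; since $|E_x'|=2^{m-1}$ by Theorem \ref{ExtremalTheorem} and $|D|\leq 2^{m-1}$, equality gives $E_x'=D$, so $E_x=\{(P,x):P\in D\}$ is completely pinned down by $D$.

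Next, let $y$ be the other terminal of $E$. Any path in $E$ ending in $y$ contracts (by removing $x$) to a path in $D$ ending in $y$, so by Lemma \ref{TermVertLemma} we have $y\in\{a_1,a_2\}$. Fix such a $y$. The paths $E_y\subseteq E$ ending in $y$, with $y$ deleted, form by Lemma \ref{ContractionLemma} a maximal Arrow's single-peaked domain $E_y'$ on $(A\cup\{x\})\setminus\{y\}$, in which $x$ is itself a terminal (paths of $E_y$ with $x$ in position $m$ become paths of $E_y'$ ending in $x$). An argument exactly parallel to the one above, now contracting $E_y'$ on $A\setminus\{y\}$, identifies $(E_y')'(A\setminus\{y\})$ with $D_y'$, the simplified contraction on $A\setminus\{y\}$ of the paths of $D$ ending in $y$; Lemma \ref{ContractionLemma} ensures $D_y'$ is maximal Arrow's single-peaked on a set of size $m-1$. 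Thus $E_y'$ is precisely a maximal Arrow's single-peaked extension of $D_y'$ by $x$ with $x$ terminal, and the inductive hypothesis supplies $2^{m-2}$ choices of such $E_y'$.

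Conversely, given any such $E_y'$, I would form $E=E_x\cup\{(H',y):H'\in E_y'\}$ and check that it is a maximal Arrow's single-peaked extension of $D$. The size is $2^{m-1}+2^{m-1}=2^m$, so by Theorem \ref{ExtremalTheorem} maximality follows once Arrow's single-peakedness is verified on every triple of $A\cup\{x\}$. Triples inside $A$ are handled via $E'(A)=D$; on a triple $\{x,u,y\}$ containing both new terminals, $E_x$-paths place $x$ last and $E_y$-paths place $y$ last, forcing $u$ to be never-bottom; on a triple $\{x,u,v\}$ with $y\notin\{u,v\}$, the $E_x$-paths place $x$ last in the restriction, while $E_y'$ (being maximal Arrow's single-peaked with $x$ terminal) has its never-bottom on this triple in $\{u,v\}$, matching the constraint from $E_x$. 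This consistency on mixed triples is the main obstacle, and it holds precisely because $x$ is terminal in $E_y'$: otherwise the combined contraction would contain paths ending in each of $u,v,x$, producing a double cycle. The two admissible values of $y$ yield disjoint families of extensions (the other terminal is recovered from $E$), so the total count is $2\cdot 2^{m-2}=2^{m-1}$, as claimed.
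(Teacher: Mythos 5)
Your proposal is correct and follows essentially the same route as the paper's proof: decompose a maximal extension $E$ into the forced block of paths ending in $x$ (namely $D$ with $x$ appended) and a block ending at one of the two terminal vertices $y$ of $D$, identify the latter block with an extension by $x$ of the contracted domain of paths ending in $y$, and induct to get $2\cdot 2^{m-2}=2^{m-1}$ choices. You are in fact somewhat more thorough than the paper, which omits both the verification that the assembled $E$ satisfies the never-bottom condition on the mixed triples and the converse argument that every such extension arises from this construction.
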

\begin{proof}
First note that this lemma does not state that the domains achieved in this process are non-isomorphic, only that they are not equal.
We proceed by induction. If $m=3$ and $D=\{abc, bac, acb, cab\}$, then $E$ can be one of the following:
\begin{enumerate}
\item $\{abcx, bacx, cabx, acbx, abxc, baxc, axbc, xabc\}$;
\item $\{abcx, bacx, cabx, acbx, abxc, baxc, bxac, xbac\}$;
\item $\{abcx, bacx, cabx, acbx, acxb, caxb, axcb, xacb\}$;
\item $\{abcx, bacx, cabx, acbx, acxb, caxb, cxab, xcab\}.$
\end{enumerate}
Now assume that for any domain on a set of size $m-1,$ there are $2^{m-2}$ ways of adding $x$ as a terminal vertex. We prove the claim for a set of size $m$.
First let $E_1$ be the set of paths on $A\cup \{x\}$ obtained from $D$ by appending $x$ to the end of each path. 
Next let $\{s,f\}$ be the terminal vertices of $D$. Our first choice is whether $s$ or $f$ should remain a terminal vertex. If we choose $f$, then from Lemma \ref{ContractionLemma} the paths ending in $f$ form a maximal Arrow's single-peaked domain on $A\backslash\{f\}$, and so by the induction hypothesis, there are $2^{m-2}$ ways of adding $x$ as a terminal vertex to this domain. Let $D_2$ be one of these extensions. Now let $E_2$ be the set of paths on $A\cup \{x\}$ obtained from $D_2$ by appending $f$ to the end of each path. Then $E=E_1\cup E_2$ gives the maximal Arrow's single-peaked domain on $A\cup \{x\}$, and there are $2\times 2^{m-2}=2^{m-1}$ ways of obtaining a domain in this way.
\end{proof}

\begin{proof}
We proceed by induction, with the base case of $m=3$. All of the possible extensions of $D=\{abc, bac, acb, cab\}$ by $x$ are given in the proof of the proceeding lemma. Each of the resulting four domains corresponds to a particular $W\in D$, such that $(x,W)$ is in the domain extension. This proves the claim for $m=3$.
Now suppose the claim is true on any set of size $m-1$. Let $D$ be the domain we wish to extend, and let $W\in D$. 
In order to create $E$ over $A\cup \{x\}$, we first append $x$ to every path in $D$, giving our first $2^m-1$ paths of $E$. We then select all paths which end in the same element as $W$, for example $f$. By Lemma \ref{ContractionLemma}, these form a maximal Arrow's single-peaked domain on $A\backslash \{f\}$. Let $W'$ be the domain contraction of $W$ on $A\backslash \{f\}$. By the induction hypothesis, this domain can be uniquely extended to a maximal Arrow's single-peaked domain on $(A\backslash \{f\})\cup \{x\}$ containing $(x,W')$. We then append $f$ to the $2^{m-1}$ paths produced by this domain extension, which completes the domain $E$ on $A\cup \{x\}$.
\end{proof}

\begin{lemma}
\label{ShuffleLemma}
Let $D$ be a maximal Arrow's single-peaked domain on a set $A$ of size $m$. Let $f$ be a terminal vertex of $D$, and let $W\in D$ be a path with $f$ in position $i$. Let $W_k$ be the path on $A$ obtained from $W$ by moving $f$ to position $k$. If $k\geq i$ then $W_k\in D$.
\end{lemma}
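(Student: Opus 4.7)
The plan is to induct on $k-i$, moving $f$ one position later at a time. The base case $k=i$ is immediate since $W_i=W\in D$. For the inductive step, suppose $W_j\in D$ for some $i\le j<k$, let $z$ be the element at position $j+1$ in $W_j$, and note that $W_{j+1}$ is obtained from $W_j$ by swapping the adjacent entries $f$ and $z$. By the maximality of $D$, it suffices to show that $D\cup\{W_{j+1}\}$ contains no Condorcet triple, for then $W_{j+1}$ must already lie in $D$.

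Since $W_{j+1}$ differs from $W_j$ only in the relative order of $f$ and $z$, the simplified domain contraction on any triple $S\subseteq A$ not containing both $f$ and $z$ is unaffected by the replacement of $W_j$ by $W_{j+1}$. It therefore suffices to check triples of the form $S=\{f,z,x\}$ with $x\in A\setminus\{f,z\}$. For each such triple the Arrow's single-peaked condition provides a never-bottom element $n\in\{f,z,x\}$. The key observation is that $f$ itself cannot be a never-bottom element of any triple: since $f$ is terminal, some path of $D$ ends with $f$, and in that path $f$ sits at the bottom of every triple containing it, which would contradict $f$ being never-bottom.

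We now split into two cases according to where $x$ sits in $W_j$. If $x$ lies before $f$, i.e.\ in a position less than $j$, then the restriction of $W_j$ to $\{f,z,x\}$ is $(x,f,z)$, which places $z$ at the bottom; hence $z$ is not never-bottom either, so $n=x$. The restriction of $W_{j+1}$ to $S$ is $(x,z,f)$, which still has $x$ at the top and is therefore consistent with $x$ being never-bottom. If instead $x$ lies after $z$, i.e.\ in a position greater than $j+1$, then the $W_j$-restriction is $(f,z,x)$, placing $x$ at the bottom and excluding $x$ as never-bottom, so $n=z$. The $W_{j+1}$-restriction is $(z,f,x)$, with $z$ at the top, consistent with $z$ never-bottom.

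In both cases the order contributed by $W_{j+1}$ respects the never-bottom element $n$ already present for $S$ in $D$. Since any Condorcet triple (double cycle) on $S$ necessarily contains an order in which the never-bottom element sits at the bottom, no Condorcet triple can arise in $D\cup\{W_{j+1}\}$, so $W_{j+1}\in D$ and the induction advances. The main obstacle is ruling out the apparently bad scenario in which $f$ is the never-bottom element of the triple $\{f,z,x\}$; this is precisely the place where terminality of $f$ is indispensable, and without it the lemma would fail.
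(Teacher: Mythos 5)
Your proof is correct and follows essentially the same strategy as the paper's: induct by transposing $f$ with its immediate successor one step at a time, observe that only triples containing both swapped elements are affected, and use the terminality of $f$ (via a path of $D$ ending in $f$) to identify the never-bottom element of each such triple and verify the new order respects it. Your version is in fact slightly more uniform, since the paper treats the cases where $f$ is first, second-to-last, or in a middle position separately, whereas your single inductive step covers all of them.
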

\begin{proof}
If $W$ ends in $f$ then the conclusion is trivial. If $f$ is the second-to-last vertex in $W$, then it follows from Lemma \ref{ContractionLemma}. 
 Now suppose $f$ is in position $2\leq i\leq m-2$. Let $b$ be the vertex directly after $f$ in $W$, and let $a$ be any vertex before $f$ in $W$, so that the domain contraction of $W$ on $\{a,b,f\}$ is equal to $(a,f,b)$. Then the terminal vertices of this contraction must be $f$ and $b$. Thus $W_{i+1}$ obtained from $W$ by swapping $f$ and $b$ will be in $D$, since $W_{i+1}'$ will be equal to $W'$ on all other contractions. The proof now follows by induction for all $2\leq i \leq m-2$.
Finally, if $W$ begins with $(f, a)$ for some $a\in A$, then for all $b\in A\backslash \{a,f\}$, the domain contraction of $W$ on $\{a,b,f\}$ is $(f,a,b)$. Since $f$ is a terminal vertex, the ``never-bottom" element of this triple is $a$. Thus, $W_2$ will be in $D$, since it will be equal to $(a,f,b)$ on any domain contraction of this form $\{a,b,f\}$, and equal to $W$ on the domain contration of any other triple. This completes the proof.
\end{proof}

\end{section}

\begin{section}{Isomorphisms and counting}
\label{S:iso_count}

Example \ref{eg1} outlined in Section \ref{S:conj} shows that extremal paths do not define maximal Arrow's single-peaked domains. However, since the extremal paths of a maximal Arrow's single-peaked domain provide some characterisation of its isomorphism class, it is still worth considering extremal paths in an attempt to count the number of isomorphism classes. The following Lemma was given by A. Slinko in [12].

\begin{lemma} 
\label{L:Slinko}
[12] Let $A$ be a set of size $m$, and let $P=(s,a_1,a_2,\dots, a_{m-2},f)$ be a Hamilton directed path on $A$ and let $\theta\in S_{A}$. If $Q=\theta(P)$ and $Q'=\theta\inv(P)$ then the pair $(P,Q)$ is isomorphic to the pair $(P, Q')$.
\end{lemma}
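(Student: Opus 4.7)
The plan is to exhibit $\phi=\theta^{-1}$ itself as the required isomorphism between the unordered pairs $\{P,Q\}$ and $\{P,Q'\}$. An isomorphism of pairs of Hamilton paths on $A$ is most naturally interpreted as a permutation of $A$ whose vertex-wise action on paths carries one pair to the other, and this is the interpretation I would adopt.

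First I would unpack the definitions: writing out $Q=\theta(P)=(\theta(s),\theta(a_1),\dots,\theta(a_{m-2}),\theta(f))$ and $Q'=\theta^{-1}(P)=(\theta^{-1}(s),\theta^{-1}(a_1),\dots,\theta^{-1}(a_{m-2}),\theta^{-1}(f))$. Applying $\theta^{-1}$ coordinate-wise to $P$ gives $Q'$ immediately by definition. Applying $\theta^{-1}$ coordinate-wise to $Q=\theta(P)$ gives $\theta^{-1}(\theta(P))=P$, since $\theta^{-1}\theta=\mathrm{id}$.

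Hence the induced action of $\theta^{-1}$ on the path set sends $P\mapsto Q'$ and $Q\mapsto P$, so the unordered pair $\{P,Q\}$ is carried to $\{Q',P\}=\{P,Q'\}$. This is the claimed isomorphism.

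There is no substantive obstacle: once the correct candidate permutation $\theta^{-1}$ is identified, the verification collapses to the group-theoretic identity $\theta^{-1}\theta=\mathrm{id}$. The only point deserving care is clarifying the notion of isomorphism of pairs in context; with that settled, the argument is a one-line computation.
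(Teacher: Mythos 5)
Your proof is correct and is essentially the paper's own argument: the paper observes that $\theta$ sends $P$ to $Q$ and $Q'$ to $P$, which is exactly your map $\theta^{-1}$ read in the opposite direction. Nothing is missing.
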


The proof was not given by Slinko in [12], but it easy to see that $\theta$ sends $P$ to $Q$ and $Q'$ to $P$, and the proof follows. Note that if $\theta^2=id$ then $\theta$ is an automorphism.\\
Thus we may consider possible pairs of extremal paths on a set $A$ of size $m$, based on an arbitrary directed Hamilton path $P=(s,a_1,a_2,\dots, a_{m-2},f)$ and the permutations $\theta=(sf)\sigma$ with $\sigma\in S_{A\backslash\{s,f\}}$. Lemma \ref{L:Slinko} allows us to exclude one such permutation from each pair $\{\sigma, \sigma\inv\},$ for which $\sigma\inv\neq \sigma$.

\begin{defn}
{\em Let $D$ be a maximal Arrow's single-peaked domain with $P=(s, a_1, a_2, \dots, a_{m-2}, f)$ and 
 $Q=(f,\sigma(a_1),\sigma(a_2),\dots, \sigma(a_{m-2}),s)$ as extremal paths. We say that the permutation formed by the composition of $(sf)$ with $\sigma$ is the} inherited permutation {\em of $D$, and denote it by $\theta_D$. We may also refer to $\sigma$ as $\sigma_D$.}
\end{defn}
 Note that an inherited permutation must swap the terminal vertices of the domain. This will be assumed throughout this paper. 

\begin{lemma}
\label{isoLem}
If $D_1$ and $D_2$ are maximal Arrow's single-peaked domains with $P$ and $Q=\theta(P)$ as extremal paths, such that $D_1\neq D_2$ then if $D_1$ and $D_2$ are isomorphic then $\theta$ is the isomorphism between the two. Consequently if $\theta$ is not of order 2 then $D_1$ and $D_2$ cannot be isomorphic.
\end{lemma}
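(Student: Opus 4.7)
The plan is to regard an isomorphism between $D_1$ and $D_2$ as a permutation of the underlying set $A$, then exploit the fact that a Hamilton path visits every vertex in order to pin down this permutation completely from its action on the extremal paths. The two key ingredients I would invoke from earlier in the paper are Lemma \ref{TermVertLemma} and Theorem \ref{ExtremalTheorem}, which together guarantee that each of $D_1$ and $D_2$ has exactly two terminal elements and exactly two extremal paths, and these must be $P$ and $Q$ in both domains.

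First I would argue as follows. Suppose $\phi \in S_A$ is an isomorphism $D_1 \to D_2$. Since isomorphisms preserve the property of being extremal, $\phi$ must send the extremal-path set $\{P,Q\}$ of $D_1$ to the extremal-path set $\{P,Q\}$ of $D_2$, so either $\phi$ fixes each of $P,Q$ setwise or it swaps them. In the first case, because $P$ is a Hamilton path, the position of each $a \in A$ in $P$ is uniquely determined, so $\phi(P)=P$ forces $\phi$ to fix every vertex, giving $\phi = id$ and hence $D_1 = D_2$, contrary to hypothesis. Therefore $\phi(P) = Q$ and $\phi(Q) = P$. Since $\phi(P) = Q = \theta(P)$ and $P$ traverses all of $A$, the two permutations $\phi$ and $\theta$ must agree on every vertex, so $\phi = \theta$.

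For the consequence, I would then compute $\theta^2(P) = \theta(Q) = P$. Again because $P$ is a Hamilton path, this forces $\theta^2$ to fix every element of $A$, i.e.\ $\theta^2 = id$. Also $\theta \neq id$, since otherwise $P = Q$, contradicting the fact that a maximal Arrow's single-peaked domain has two distinct extremal paths. Hence $\theta$ has order exactly $2$; contrapositively, if $\theta$ is not an involution then no such $\phi$ can exist, so $D_1$ and $D_2$ are not isomorphic.

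I do not expect any serious obstacle here: the whole argument rests on the single observation that a permutation of $A$ is determined by its action on any one Hamilton path through $A$. The only thing to be a little careful about is the opening step, namely justifying that an isomorphism of domains, a priori only a bijection of path-sets, really is induced by a permutation of $A$ that preserves extremal paths; this is immediate from the definition of domain isomorphism together with Theorem \ref{ExtremalTheorem}.
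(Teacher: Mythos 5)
Your proof is correct and takes essentially the same route as the paper's: both arguments observe that an isomorphism must fix or swap the extremal paths $P$ and $Q$, that it is therefore either the identity (excluded by $D_1\neq D_2$) or $\theta$, and that $\theta$ swapping $P$ and $Q$ forces $\theta^2=id$. Your added observation that a permutation of $A$ is determined by its action on a single Hamilton path merely makes explicit a step the paper leaves implicit.
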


\begin{proof}
First note that any isomorphism between $D_1$ and $D_2$ must either swap or fix $P$ and $Q$. Hence the only possible isomorphisms are the identity and $\theta$. However, the identity is excluded by the assertion that $D_1\neq D_2$. Hence $D_1$ and $D_2$ are isomorphic if and only if $\theta$ is the isomorphism between them.
 Now suppose $\theta^2$ is not the identity. Then $\theta(P)=Q$ and $\theta(Q)=\theta^2(P)\neq P$. Hence $\theta$ can not be an isomorphism between $D_1$ and $D_2$ as it neither fixes nor swaps $P$ and $Q$. Therefore $\theta^2$ must be the identity if $D_1$ and $D_2$ are isomorphic.
\end{proof}

\begin{defn}
{\em Let $D$ be a maximal Arrow's single-peaked domain with inherited permutation $\theta$. If $\theta$ maps $D$ to itself we say that $D$ is} self-paired.
\end{defn}
The next lemma follows directly from Lemma \ref{isoLem}.
\begin{lemma}
\label{SPLemma}
If $D$ is a self-paired maximal Arrow's single-peaked domain with inheirited permutation $\theta$, then $\theta$ must have order 2.
\end{lemma}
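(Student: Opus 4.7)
The plan is to exploit the fact that a self-paired domain $D$ has $\theta$ as an automorphism, and then argue almost exactly as in the proof of Lemma \ref{isoLem} that this forces $\theta^{2}=id$.

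First I would unpack the definition: ``self-paired'' means $\theta(D)=D$ (one inclusion suffices by finiteness and injectivity of $\theta$ on $\mathcal{L}(A)$), so $\theta$ is in fact an automorphism of $D$. I would then observe that any automorphism of $D$ must permute its terminal elements, and since Lemma \ref{TermVertLemma} says a maximal Arrow's single-peaked domain has exactly two terminal elements, namely $s$ and $f$, it must permute $\{s,f\}$. This is consistent with $\theta=(sf)\sigma$ by construction. Consequently $\theta$ also permutes the set of extremal paths, and by Theorem \ref{ExtremalTheorem} there are exactly two of these, namely $P$ and $Q$.

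Next I would use $\theta(P)=Q$, which is built into the definition of the inherited permutation, to deduce $\theta(Q)=P$. Indeed, since $\theta$ permutes $\{P,Q\}$ and already sends $P$ to $Q$, it must send $Q$ back to $P$. This gives $\theta^{2}(P)=P$, meaning $\theta^{2}$ fixes the Hamilton directed path $P$ vertex-by-vertex. Because $P$ visits every element of $A$, this forces $\theta^{2}=id$ on $A$, so $\theta$ has order dividing $2$.

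Finally I would rule out $\theta=id$: since $\theta=(sf)\sigma$ with $\sigma\in S_{A\backslash\{s,f\}}$ fixing $s$, we have $\theta(s)=f\neq s$, so $\theta$ is nontrivial. Combined with $\theta^{2}=id$, this gives $\theta$ of order exactly $2$. There is no real obstacle here; the only subtlety is being careful that ``self-paired'' really does give an honest automorphism (not just an embedding) and that the two extremal paths of a maximal Arrow's single-peaked domain are forced to be exactly $P$ and $Q$ — both facts come straight from Lemma \ref{TermVertLemma} and Theorem \ref{ExtremalTheorem}, so the argument is essentially a restatement of the second half of the proof of Lemma \ref{isoLem}.
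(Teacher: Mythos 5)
Your proof is correct and follows the same route the paper intends: the paper derives this lemma directly from Lemma \ref{isoLem}, whose proof contains exactly your key step ($\theta(P)=Q$ and $\theta(Q)=\theta^2(P)$ must equal $P$ since $\theta$ permutes the two extremal paths, forcing $\theta^2=id$ because a Hamilton path fixed vertex-by-vertex determines the permutation on all of $A$). Your additional care in verifying that self-pairedness gives an honest automorphism and that $\theta\neq id$ (since $\theta(s)=f$) only makes explicit what the paper leaves implicit.
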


\begin{lemma}
\label{selfPLemma}
Let $D$ be a maximal Arrow's single-peaked domain on a set $A$ with inherited permutation $\theta$. Then $D$ is self-paired if and only if for each triple $T\subseteq A$, if $a$ is the ``never-bottom" element of a triple $T$, then $\theta(a)$ is the ``never-bottom" element of $\theta(T)$. 
\end{lemma}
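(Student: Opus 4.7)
The plan is to first establish a useful reformulation of membership in $D$: since $D$ is a maximal Arrow's single-peaked domain on $A$, each triple $T\subseteq A$ carries a unique designated never-bottom element $\beta(T)\in T$, and a Hamilton directed path $H$ on $A$ lies in $D$ if and only if $\beta(T)$ is not the final vertex of the contracted path $H'(T)$ for every triple $T$. The forward implication here is immediate from the definition of Arrow's single-peaked. For the reverse, if $H$ respects every never-bottom condition then $D\cup\{H\}$ contains no Condorcet triple on any $T$ (the never-bottom condition already rules these out), so $D\cup\{H\}$ is itself a Condorcet domain, and maximality forces $H\in D$.

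Next I would record how $\theta$ interacts with contractions: viewing $\theta$ as acting on a path by relabelling its vertices, one has $[\theta(D)]'(\theta(T))=\theta(D'(T))$ for every triple $T$, because restricting $\theta(H)$ to $\theta(T)$ is the same as restricting $H$ to $T$ and then applying $\theta$.

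For the forward direction, suppose $\theta(D)=D$ and fix a triple $T$ with $\beta(T)=a$. The paths in $D'(T)$ are exactly the four paths through $T$ not ending at $a$; applying $\theta$ gives the four paths through $\theta(T)$ not ending at $\theta(a)$. Combining the two observations with $\theta(D)=D$ yields $D'(\theta(T))=\theta(D'(T))$, so the unique never-bottom element of $\theta(T)$ is $\theta(a)$. For the converse, assume the hypothesis $\theta\circ\beta=\beta\circ\theta$ on triples. Given $H\in D$ and any triple $T$, set $T'=\theta^{-1}(T)$, so $\beta(T')=\theta^{-1}(\beta(T))$. Since $H\in D$, the vertex $\beta(T')$ is not last in $H'(T')$, hence $\theta(\beta(T'))=\beta(T)$ is not last in $\theta(H'(T'))=[\theta(H)]'(T)$. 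As this holds for every $T$, the characterization gives $\theta(H)\in D$, so $\theta(D)\subseteq D$, and equality follows because $\theta$ permutes the finite set of Hamilton paths on $A$.

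The main obstacle is the initial characterization of membership: one must check carefully that in a maximal Arrow's single-peaked domain the never-bottom assignment $\beta$ is both well-defined on every triple and completely determines $D$. Once that is in hand, the lemma reduces to tracking how $\theta$ commutes with the contraction operation, and both implications become routine.
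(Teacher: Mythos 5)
Your proof is correct and follows essentially the same route as the paper's: both reduce the self-paired condition to the triple-wise ``never-bottom'' data by observing that $\theta$ commutes with contraction. You are more explicit than the paper about the one genuinely load-bearing step, namely that maximality forces $D$ to be exactly the set of paths obeying all the never-bottom conditions (so that respecting every triple condition suffices for membership), which the paper compresses into the sentence ``this occurs for all $W\in D$ if and only if it occurs on every triple.''
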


\begin{proof}
By definition, $D$ is self-paired if and only if $\theta(W)\in D$ for each $W\in D$. It follows that if $W$ terminates at $b\in B$ on some domain contraction on $B\subseteq A$, then the domain contraction on $\theta(B)$ then $\theta(W)$ terminates at $\theta(b)$. Thus $\theta(b)$ must be a terminal vertex of the domain contraction on $\theta(B)$ in order for $D$ to be self-paired. This occurs for all $W\in D$ if and only if it occurs on every triple. 
\end{proof}
\begin{rem}
\label{Remark1}
{\em Let $S\subseteq A$ be the maximal subset of $A$ such that $\theta$ fixes every element in $A$. If $T$ is a triple in $A$ then $\theta$ fixes every element of $T$. Hence the ``never-bottom" elements of $T$ and $\theta(T)$ coincide. Furthermore, for triples containing both $s$ and $f$ and triples $T$ such that $T\subseteq (S\cup \{s,f\})$ the ``never-bottom of $T$ and $\theta(T)$ coincide.  Hence, the problem is reduced to checking triples which contain at least one element of $A\backslash (S\cup \{s,f\})$.
Furthermore, suppose that $D$ is self-paired, and that $b=\theta(a)$ where $a\in A\backslash S$. Let $T=\{a,b,x\}$. Then either $x$ is the ``never-bottom" element of $T$, or $\theta(x)\neq x$.}
\end{rem}
\begin{cor}
\label{cor:SP}
Let $D$ be a maximal Arrows single-peaked domain on a set $A$ with extemal paths $P$ and $\theta(P)$. If there exist $x, x', a\in A$ such that $\theta(x)=x'$, $\theta(a)=a$ and $P(\{x,x',a\})=(x, x', a)$ then $D$ is not self-paired.
\end{cor}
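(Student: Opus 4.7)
The plan is to read off the never-bottom element of the triple $\{x, x', a\}$ in $D$ from the two given extremal paths, and then show that no consistent assignment can respect the permutation $\theta$ on this triple, so that Lemma \ref{selfPLemma} is violated.

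First, I would observe that the hypothesis $P(\{x, x', a\}) = (x, x', a)$ means that in the linear order $P$, the element $a$ comes after both $x$ and $x'$ on this triple, so $a$ is at the bottom of $P$ restricted to $\{x, x', a\}$. Next, applying $\theta$ and using $\theta(x) = x'$, $\theta(x') = x$, $\theta(a) = a$, the triple $\theta(\{x, x', a\})$ is the same set $\{x, x', a\}$, and $\theta(P)$ restricted to it is $(x', x, a)$; in particular $a$ is again at the bottom. So both extremal paths $P$ and $\theta(P)$ of $D$ place $a$ last on the triple $\{x, x', a\}$.

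Now I would invoke the never-bottom condition. Since $D$ is Arrow's single-peaked, the simplified domain contraction $D'(\{x,x',a\})$ is a subcollection of Graph 3 in Figure \ref{fig:triples}, and the never-bottom element is the unique one that never appears last on this triple. Since both extremal paths show $a$ in last position, $a$ cannot be the never-bottom element of $\{x, x', a\}$; hence the never-bottom element is either $x$ or $x'$.

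Finally, suppose for contradiction that $D$ is self-paired. By Lemma \ref{selfPLemma}, if $b \in \{x, x'\}$ is the never-bottom element of $\{x, x', a\}$, then $\theta(b)$ must be the never-bottom element of $\theta(\{x, x', a\}) = \{x, x', a\}$. But $\theta$ swaps $x$ and $x'$, so this forces both $x$ and $x'$ to be the never-bottom element of the same triple, contradicting the uniqueness of the never-bottom element in a maximal Arrow's single-peaked domain. Therefore $D$ is not self-paired. The only routine step is the observation about the last position in the restricted paths; the substantive move is noting that $\theta$ fixes the triple setwise but swaps the two candidates for never-bottom, which is what rules out self-pairing.
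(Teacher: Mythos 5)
Your argument is correct and follows the same route the paper intends: the corollary is left there as an immediate consequence of Remark \ref{Remark1} and Lemma \ref{selfPLemma}, and your proof is precisely that argument made explicit ($a$ occurs last in $P$ restricted to the triple, so it cannot be the never-bottom element, which must therefore be $x$ or $x'$, and $\theta$ swaps these while fixing the triple setwise). The one step to justify is $\theta(x')=x$, which is not part of the hypothesis but follows from Lemma \ref{SPLemma} once you assume for contradiction that $D$ is self-paired.
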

Note that if $\theta$ does not have such a pair there is at least one self-paired maximal Arrow's single-peaked domain with $P$ and $\theta(P)$ as extremal paths but there may also be non-self-paired maximal Arrow's single-peaked domains with $P$ and $\theta(P)$ as extremal paths. We now show the importance of self-paired domains with the main theorem of this paper.

\begin{thm}
\label{Thm:main}
Let $\mathcal{N}(m)$ denote the number of isomorphism classes of maximal Arrow's single-peaked domains on $m$ elements. Let $\mathcal{P}(m)$ denote the total number of all maximal Arrow's single-peaked domains on a set of size $m$, without reduction under isomorphism, but which all share a common extremal path. Let $\mathcal{SP}(m)$ denote the number of isomorphism classes of self-paired maximal Arrow's single-peaked domains on a set of size $m$. Then $\mathcal{P}(3)=\mathcal{N}(3)=\mathcal{SP}(3)=1,$ and for all $m\geq 4$ the following hold:
\begin{enumerate}
\item $\mathcal{P}(m)=2^{m-3}\mathcal{P}(m-1)$,
\item $\mathcal{N}(m)=\frac{1}2(\mathcal{P}(m)+\mathcal{SP}(m))$, and
\item $\frac{1}2\mathcal{P}(m)\leq \mathcal{N}(m)\leq \mathcal{P}(m).$
\end{enumerate}

\end{thm}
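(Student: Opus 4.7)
The plan is to establish (1), (2), (3) in sequence: (1) by a contraction-plus-extension argument, (2) by an orbit-counting argument on the set of domains sharing a fixed extremal path, and (3) as a direct corollary of (2). For the base case $m=3$, the classification in Figure~\ref{fig:triples} shows that the only maximal Arrow's single-peaked domain on $A=\{s,a,f\}$ with $P=(s,a,f)$ extremal consists of the four paths $(s,a,f)$, $(a,s,f)$, $(a,f,s)$, $(f,a,s)$, so $\mathcal{P}(3)=\mathcal{N}(3)=1$; one checks directly that the transposition $(sf)$ sends this domain to itself, so $\mathcal{SP}(3)=1$ as well.

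For part (1), I fix a common extremal path $P=(s,a_1,\dots,a_{m-2},f)$ and analyse the map $D\mapsto D_f'$ that sends a domain $D$ counted by $\mathcal{P}(m)$ to the simplified contraction on $A\backslash\{f\}$ of those paths in $D$ ending at $f$. By Lemma~\ref{ContractionLemma}, $D_f'$ is a maximal Arrow's single-peaked domain on $m-1$ elements with $P'=(s,a_1,\dots,a_{m-2})$ as an extremal path, so the image is counted by $\mathcal{P}(m-1)$. It then suffices to show each fibre has size exactly $2^{m-3}$. This follows from Lemma~\ref{L:Ext} applied to $D_f'$ with new terminal $f$: the $2^{m-2}$ maximal extensions split, according to the proof of Lemma~\ref{L:Ext}, as two choices for which of the old terminals $\{s,a_{m-2}\}$ of $D_f'$ is retained, times $2^{m-3}$ choices within the corresponding sub-domain. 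Requiring $P$ to remain extremal in $D$ forces $s$ to be retained as a terminal of $D$, which exactly halves the count and gives $\mathcal{P}(m)=2^{m-3}\mathcal{P}(m-1)$.

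For part (2), let $\mathcal{X}$ be the set of maximal Arrow's single-peaked domains on $A$ with $P$ extremal, so $|\mathcal{X}|=\mathcal{P}(m)$. The key rigidity is that a permutation of $A$ fixing a Hamilton path must be the identity, so any automorphism of $D\in\mathcal{X}$ either fixes both extremals $P$ and $Q_D=\theta_D(P)$ (and is the identity) or swaps them (and is forced to equal $\theta_D$); consequently $|{\rm Aut}(D)|=2$ if and only if $D$ is self-paired. By the same rigidity, as in the proof of Lemma~\ref{isoLem}, any isomorphism between $D,D'\in\mathcal{X}$ is either the identity (so $D=D'$) or equal to $\theta_{D'}=\theta_D^{-1}$ (so $D'=\theta_D^{-1}(D)$). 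Therefore the map $\Phi\colon \mathcal{X}\to\mathcal{X}$ defined by $\Phi(D)=\theta_D^{-1}(D)$ is a well-defined involution whose orbits are exactly the intersections of isomorphism classes with $\mathcal{X}$. Since every isomorphism class meets $\mathcal{X}$ (relabel any extremal of a representative to $P$), the number of orbits is $\mathcal{N}(m)$; the fixed points of $\Phi$ are the self-paired domains in $\mathcal{X}$, and because each self-paired isomorphism class contributes a single member to $\mathcal{X}$ (its unique fixed representative), the number of fixed points equals $\mathcal{SP}(m)$. Counting orbits gives $\mathcal{N}(m)=\mathcal{SP}(m)+\frac{1}{2}(\mathcal{P}(m)-\mathcal{SP}(m))=\frac{1}{2}(\mathcal{P}(m)+\mathcal{SP}(m))$, proving (2). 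Part (3) is then immediate from (2), since $0\le\mathcal{SP}(m)\le\mathcal{N}(m)$ yields $\frac{1}{2}\mathcal{P}(m)\le\mathcal{N}(m)\le\mathcal{P}(m)$ after one line of algebra.

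The main obstacle will be the orbit analysis in (2): verifying that $\Phi$ is a well-defined involution, that every isomorphism between elements of $\mathcal{X}$ reduces either to the identity or to the inverse inherited permutation, and that each self-paired isomorphism class is represented exactly once in $\mathcal{X}$. Once these rigidity statements, all ultimately grounded in the fact that permutations of $A$ fixing a Hamilton path are trivial, are in place, the counting is routine.
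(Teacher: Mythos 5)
Your proposal is correct and follows essentially the same route as the paper: the base case by direct enumeration, part (1) by the extension count of Lemma \ref{L:Ext} restricted to extensions keeping $s$ terminal (you run the bijection via contraction rather than extension, but it is the same correspondence), part (2) by pairing each domain with its image under the inherited permutation and counting self-paired domains separately (your explicit involution $\Phi(D)=\theta_D^{-1}(D)$ is just a more formal phrasing of the paper's partition into $B_1$ and $B_2$), and part (3) by trivial algebra. Your orbit-counting write-up of (2) is in fact somewhat more careful than the paper's, which simply asserts $\mathcal{N}(m)=|B_1|+\frac{1}{2}|B_2|$.
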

\begin{proof}
Note that $\mathcal{SP}(m)$ also denotes the number of of self-paired maximal Arrow's single-paired domains on $m$ elements which share an extremal path.
First we prove $\mathcal{P}(3)=\mathcal{N}(3)=\mathcal{SP}(3)=1$. Take the set $A=\{a,b,c\}$. Consider the maximal Arrow's single-peaked domains containing the path $(a,b,c)$ as an extremal path. For these, $a$ and $c$ must be the terminal vertices. But there is only one maximal Arrow's single-peaked domain for $A$ with $a$ and $c$ as terminal vertices, namely $\{(a,b,c), (c,b,a), (b,c,a), (b,a,c)\}$. Hence $\mathcal{P}(3)=1$, and since there is only one, $\mathcal{N}(3)=1$. Finally this domain is self-paired, so $\mathcal{SP}(3)=1$.\\

Next we prove \ref{Thm:main}.$(1)$. Let $S$ be the set of $\mathcal{P}(m-1)$ maximal Arrow's single-peaked domains on a set $A$, of size $m-1$, which all have a common extremal path $P=(f,a_1,\dots, a_{m-2},s)$. 
By Lemma \ref{L:Ext}, each domain in $S$ can be extended to a maximal Arrow's single-peaked domain on $A\cup \{x\}$, in a total of $2^{m-2}$ ways. Half of these domains will have terminal vertices $\{x,s\}$, and the other half will have terminal vertices $\{x,f\}$. Let $S_f$ be the half which have $f$ as a terminal vertex. From the construction in the proof of Lemma \ref{L:Ext}, the path $(P, x)$ must be common to all domains in $S_f$. Furthermore, $(P,x)$ begins with $f$ and ends with $x$, and therefore, it is an extremal path. Thus we have $\mathcal{P}(m)\geq |S_f|= 2^{m-3}\mathcal{P}(m-1)$.\\
Now suppose $D$ is a maximal Arrow's single-peaked domain on $A\cup \{x\},$ with $(P,x)$ as an extremal path. Then $P$ is an extremal path in the simplified domain contraction $D'(A)$. Thus we have $D'(A)\in S$, and hence $D$ is in $S_f$ and $\mathcal{P}(m)\leq |S_f|=2^{m-3}\mathcal{P}(m-1)$. Finally, this implies that $\mathcal{P}(m)=2^{m-3}\mathcal{P}(m-1)$.\\

Now we prove \ref{Thm:main}.$(2)$. Let $S$ be the set of $\mathcal{P}(m)$ maximal Arrow's single-peaked domains on a set $A$, of size $m$, with common path $P$. Let $B_1=\{D\in S\>\mid\>\theta_D(D)=D\}$ and $B_2=\{D\in S\>\mid\>\theta_D(D)\neq D\}$. Note that $B_1$ and $B_2$ partition $S,$ and that $\mathcal{N}(m)=|B_1|+\frac{1}2|B_2|=\mathcal{SP}(m)+\frac{1}2(\mathcal{P}(m)-\mathcal{SP}(m))=\frac{1}2(\mathcal{P}(m)+\mathcal{SP}(m)),$ as required.\\

Finally \ref{Thm:main}.$(3)$ follows directly from \ref{Thm:main}.$(2)$ and the fact that $0\leq \mathcal{SP}(m)\leq \mathcal{P}(m)$.
\end{proof}

The above theorem reduces the problem of enumerating isomorphism classes of maximal Arrow's single-peaked domains to enumerating self-paired maximal Arrow's single-peaked domains. Furthermore, by Lemma \ref{SPLemma}, we need only consider maximal Arrow's single-peaked domains with inherited permutations of order 2. Corollary \ref{cor:SP} further reduces the number of permutation which need to be considered.
\end{section}

\begin{section}{Some particular inherited permutations}
\label{S:theta_count}

\begin{lemma}
\label{idLem1}
Let $D$ be a maximal Arrow's single peaked domain on a set $A$ of size $m$. Let $\theta_D=(sf)$, where $s$ and $f$ are the terminal vertices of $D$. If $W\in D$ with ${\rm pos}_W(f)=i\leq m-1$ then there exists some $W'\in D$ with ${\rm pos}_{W'}(a)={\rm pos}_W(a)$ for all $a\in A\backslash\{s,f\}$, and ${\rm pos}_{W'}(s)=i$ and ${\rm pos}_{W'}(f)=m$.
\end{lemma}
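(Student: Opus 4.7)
The plan is to observe that the positional constraints on $W'$ pin it down uniquely, and then reduce the lemma to showing that $D$ is self-paired. Since $s$ and $f$ are the only terminal vertices of $D$ by Lemma~\ref{TermVertLemma}, and ${\rm pos}_W(f)=i\le m-1$ rules out $W$ ending in $f$, we must have ${\rm pos}_W(s)=m$, so
$$W=(w_1,\dots,w_{i-1},f,w_{i+1},\dots,w_{m-1},s).$$
The positional constraints on $W'$ then force
$$W'=(w_1,\dots,w_{i-1},s,w_{i+1},\dots,w_{m-1},f)=\theta_D(W),$$
and the lemma reduces to showing $\theta_D(W)\in D$.

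The central step is to prove that any maximal Arrow's single-peaked domain whose inherited permutation equals $(sf)$ is automatically self-paired; the conclusion is then immediate from the definition of self-pairedness. By Lemma~\ref{selfPLemma}, self-pairedness is equivalent to the statement that, for every triple $T\subseteq A$, $\theta_D$ sends the never-bottom of $T$ to the never-bottom of $\theta_D(T)$. Because $\theta_D=(sf)$ fixes $A\setminus\{s,f\}$ pointwise, this holds trivially for triples disjoint from $\{s,f\}$, and for triples of the form $\{s,f,a\}$ both the triple and its never-bottom (which must be $a$, since each of $s,f$ sits at the bottom of one of the two extremal paths of $D$) are fixed by $\theta_D$.

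The only genuine check is for the paired triples $\{s,a_j,a_k\}$ and $\{f,a_j,a_k\}$ with $a_j,a_k\in A\setminus\{s,f\}$. Here I would exploit that $\theta_D=(sf)$ forces the extremal paths of $D$ to be $P=(s,a_1,\dots,a_{m-2},f)$ and $Q=(f,a_1,\dots,a_{m-2},s)$, with identical middle sequences. Taking $j<k$ and contracting $P$ and $Q$ onto $\{s,a_j,a_k\}$ yields the orders $(s,a_j,a_k)$ and $(a_j,a_k,s)$; so neither $a_k$ nor $s$ can be the never-bottom, leaving $a_j$. Performing the same computation on $\{f,a_j,a_k\}$ again produces $a_j$. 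Since $\theta_D$ fixes $a_j$, the never-bottom correspondence demanded by Lemma~\ref{selfPLemma} holds, and $D$ is self-paired.

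Once self-pairedness is established, $W'=\theta_D(W)\in D$ and the positional requirements hold by inspection. The main obstacle in the plan is isolating the right structural consequence of the hypothesis $\theta_D=(sf)$, namely the coincidence of the middle sequences of the two extremal paths of $D$, which is what pins down the never-bottom element on every relevant triple.
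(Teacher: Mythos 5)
Your proof is correct and follows essentially the same route as the paper: the paper's one-line proof appeals to Remark \ref{Remark1} (which in turn rests on Lemma \ref{selfPLemma}) to conclude that a domain with $\theta_D=(sf)$ is self-paired, which is exactly the reduction you carry out after observing that the positional constraints force $W'=\theta_D(W)$. The only difference is that you explicitly verify the never-bottom coincidence on the paired triples $\{s,a_j,a_k\}$ and $\{f,a_j,a_k\}$ using the common middle sequence of the two extremal paths, a check that Remark \ref{Remark1} asserts without detail.
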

\begin{proof}
The proof follows directly from Remark \ref{Remark1} since the fixed points of $\theta$ are $S=A\backslash \{s,f\}$ and thus, $A\backslash (S\cup \{s,f\})=\emptyset.$
\end{proof}

\begin{thm}
\label{idThm}
The number of non-isomorphic maximal Arrow's single-peaked domains on a set of size $m$, with $\sigma_D=id$ is equal to 1 for $m=3$, and $\mathcal{P}(m-1)$ for all $m\geq 4$. All such domains are self-paired.
\end{thm}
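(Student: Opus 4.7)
The plan is to set up a bijection between the maximal Arrow's single-peaked domains $D$ on $A = \{s, a_1, \ldots, a_{m-2}, f\}$ having $\sigma_D = id$ and extremal path $P = (s, a_1, \ldots, a_{m-2}, f)$, and the $\mathcal{P}(m-1)$ maximal Arrow's single-peaked domains on $A \setminus \{f\}$ that share the common extremal path $(s, a_1, \ldots, a_{m-2})$. First I would prove the self-pairing claim. Take any $W \in D$ and note that $\theta_D = (sf)$. If $W$ ends at $s$, then $f$ sits in some position $i \leq m-1$, and Lemma \ref{idLem1} supplies $W' \in D$ with $s$ at position $i$, $f$ at position $m$, and every other vertex unchanged; this $W'$ is precisely $\theta(W)$. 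If instead $W$ ends at $f$, the symmetric statement (valid because the hypothesis $\theta_D = (sf)$ treats the two terminals interchangeably) gives $\theta(W) \in D$. Since $\theta^2 = id$, we conclude $\theta(D) = D$.

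The base case $m = 3$ is immediate: the unique maximal Arrow's single-peaked domain on three elements satisfies $\sigma_D = id$, giving the count $1$. For $m \geq 4$, the forward map sends $D$ to the simplified contraction $D'_f$ on $A \setminus \{f\}$ of the paths of $D$ ending at $f$. By Lemma \ref{ContractionLemma}, $D'_f$ is a maximal Arrow's single-peaked domain on $A \setminus \{f\}$ containing $(s, a_1, \ldots, a_{m-2})$ as an extremal path with $s$ as a terminal vertex. Self-pairing of $D$ makes this map injective: the set $D_f$ of paths in $D$ ending at $f$ is recovered from $D'_f$ by appending $f$, after which $D = D_f \cup \theta(D_f)$. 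For the inverse, given such an $E$ on $A \setminus \{f\}$, I would define $E_f = \{(w, f) : w \in E\}$ and set $D = E_f \cup \theta(E_f)$. This has the required size $2^{m-1}$, and I would confirm it is Arrow's single-peaked by checking the never-bottom condition on each triple $T \subseteq A$ in four cases. When $T \subseteq A \setminus \{s, f\}$ the map $\theta$ fixes $T$ pointwise, so $D|_T = E|_T$; when $T = \{s, x, y\}$ with $x, y$ non-terminal, the contribution $E_f|_T = E|_T$ has a never-bottom $z \in \{x, y\}$ (since $s$ is a terminal vertex of $E$ and so cannot be the never-bottom of any triple containing $s$), while $\theta(E_f)|_T = \{(x, y, s), (y, x, s)\}$, so the union remains Arrow's single-peaked with never-bottom $z$; the case $T = \{f, x, y\}$ is symmetric; and for $T = \{s, f, x\}$ the combined orderings give exactly the four paths of the Arrow's single-peaked triple with never-bottom $x$.

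To convert the count of such $D$ into a count of isomorphism classes, I would invoke Lemma \ref{isoLem}. Since every $D$ in the bijection is self-paired, any two distinct $D$'s sharing the extremal paths $P$ and $\theta(P)$ are non-isomorphic. Conversely, every isomorphism class of maximal Arrow's single-peaked domains with $\sigma_D = id$ contains a representative with $P$ as an extremal path (by relabeling), and this representative is unique within its class because any isomorphism between two such would have to be either the identity or $\theta$, both of which fix the domain by self-pairing. Hence the number of isomorphism classes equals $\mathcal{P}(m-1)$. The main technical obstacle is the triple-by-triple verification for the inverse map, and especially the case $T = \{s, x, y\}$, where one must argue that the $\theta$-reflected orderings do not combine with $E|_T$ to produce a Condorcet triple; this rests squarely on the terminality of $s$ in the smaller domain $E$.
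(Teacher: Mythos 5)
Your proposal is correct and its overall architecture is the same as the paper's: both arguments put the domains with $\sigma_D=id$ and extremal path $P$ in bijection with the $\mathcal{P}(m-1)$ maximal Arrow's single-peaked domains on a set of size $m-1$ sharing the extremal path $(s,a_1,\dots,a_{m-2})$, both obtain self-pairedness from Lemma \ref{idLem1}, and both conclude that distinct domains in this family are non-isomorphic via Lemma \ref{isoLem}. Where you genuinely diverge is in how the bijection is established. The paper gets the ``at most $\mathcal{P}(m-1)$'' direction by citing Lemma \ref{ShuffleLemma} together with a unique-extension lemma (referenced as Lemma \ref{L:UniqExt}; its statement is in fact absent from the source, only an orphaned proof appears in Section \ref{S:cont_ext}), and gets the reverse inequality by contracting. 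You instead build the inverse map explicitly as $E\mapsto E_f\cup\theta(E_f)$ and check the ``never-bottom'' condition on all four types of triples by hand, with injectivity of the contraction map coming from the decomposition $D=D_f\cup\theta(D_f)$, which is exactly where self-pairedness is used. This buys a self-contained argument that does not depend on the missing lemma, at the cost of the triple-by-triple verification. Two small points you should still pin down: first, that $(s,a_1,\dots,a_{m-2})$ is genuinely \emph{extremal} in the contraction $D'_f$ requires knowing that $s$ is a terminal vertex there, which follows by applying Lemma \ref{ShuffleLemma} to the extremal path $(f,a_1,\dots,a_{m-2},s)$ to place $(a_1,\dots,a_{m-2},s,f)$ in $D$; second, that $E_f\cup\theta(E_f)$, being Arrow's single-peaked of cardinality $2^{m-1}$, is \emph{maximal} deserves a sentence, for instance by observing that your case analysis in fact exhibits a full ``never-bottom'' assignment on every triple whose associated set of admissible orders has exactly $2^{m-1}$ elements.
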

\begin{proof}
First note that $\sigma_D=id$ in any maximal Arrow's single-peaked domain $D$ on a set of size 3. Thus for $m=3$ we have $\mathcal{N}(3)=1$ domains.
Now, let $S$ be the set of $\mathcal{P}(m-1)$ domains on a set $A$, of size $m-1$, all with $(s,P)$ as an extremal path. By Lemma \ref{ShuffleLemma}, each domain in $S$ contains the path $(P,s)$, and  by Lemma \ref{L:UniqExt}, each domain in $S$ can be uniquely extended to a domain on $A\cup\{f\}$ with $(s,P,f)$ and $(f,P,s)$ as extremal paths. Thus, there are at most $\mathcal{P}(m-1)$ domains with $(s,P,f)$ and $(f,P,s)$ as extremal paths.
Let $D$ be a domain with $(s,P,f)$ and $(f,P,s)$ as extremal paths. Then the simplified domain contraction $D'(A)$ must be in $S$. Thus, for $m\geq 4$ there are at least $\mathcal{P}(m-1)$ domains, and it follows that there are exactly $\mathcal{P}(m-1)$ with $(s,P,f)$ and $(f,P,s)$ as extremal paths.
The fact that these domains are self-paired follows directly from Lemma \ref{idLem1} and hence the domains with $(s,P,f)$ and $(f,P,s)$ as extremal paths are non-isomorphic.
\end{proof}

At this stage we have reduced the problem of counting isomorphism classes of maximal Arrow's single-peaked domains to counting self-paired maximal Arrow's single-peaked domains $D$ which have $\sigma_D$ of order 2. 

\begin{lemma}
\label{L:Shuff2}
Let $D$ be a maximal Arrow's single-peaked domain. If $P=(s,a_1,\dots, a_{m-2},f)$ is an extremal path of $D$ then $D$ contains the path $P'=(a_2,a_1,a_3,\dots, a_{m-2},f,s)$.
\end{lemma}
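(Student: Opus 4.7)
The plan is to apply Lemma~\ref{ShuffleLemma} once to move $s$ into a later position, and then observe that the remaining step to $P'$ is a single transposition whose compatibility with the never-bottom conditions of $D$ can be checked triple by triple. Concretely, I would first use Lemma~\ref{ShuffleLemma} to move the terminal vertex $s$ from position $1$ in $P$ to position $m$, obtaining
$$R := (a_1, a_2, a_3, \dots, a_{m-2}, f, s) \in D.$$
Since $P'$ differs from $R$ only by transposing the first two entries $a_1$ and $a_2$, it suffices to show that for every triple $T \subseteq A$, the contraction of $P'$ on $T$ is compatible with the never-bottom element of $D$ on $T$; this ensures $D \cup \{P'\}$ is still a Condorcet domain, and by maximality of $D$ this forces $P' \in D$.

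If $T$ contains at most one of $a_1$ and $a_2$, then the contractions of $P'$ and $R$ on $T$ coincide, because $a_1$ and $a_2$ jointly occupy positions $1$ and $2$ in both paths while the remaining entries are identical; compatibility on such triples then follows from $R \in D$. The interesting case is $T = \{a_1, a_2, z\}$ with $z \in A \setminus \{a_1, a_2\}$, where the contraction of $P'$ is $(a_2, a_1, z)$, and I would need to rule out $z$ being the never-bottom element of $T$. For $z \in \{a_3, \dots, a_{m-2}, f\}$ this is witnessed by $P$ itself, whose contraction on $T$ is $(a_1, a_2, z)$; for the remaining value $z = s$ it is witnessed by $R$, whose contraction on $\{a_1, a_2, s\}$ is $(a_1, a_2, s)$. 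In either case the never-bottom element of $T$ must lie in $\{a_1, a_2\}$, so the contraction $(a_2, a_1, z)$, which ends with $z$, is compatible.

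The main obstacle is the triple $\{a_1, a_2, s\}$, on which $P$ alone does not witness that $s$ fails to be never-bottom (its contraction puts $a_2$ at the bottom instead). This is precisely why the preliminary step of passing to $R$ is essential: the shuffled path supplies the missing evidence that $s$ is not never-bottom on this triple, and without it the compatibility check at $z=s$ would fail. With this step in hand, the triple-by-triple verification proceeds uniformly and maximality of $D$ delivers $P' \in D$.
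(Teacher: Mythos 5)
Your proof is correct, and it shares the paper's overall strategy: verify that the contraction of $P'$ on every triple avoids that triple's never-bottom element, then invoke maximality of $D$ to force $P'\in D$. Where you genuinely differ is in how the triples containing $s$ are handled. The paper dispatches every triple $T$ meeting $\{s,f\}$ in one stroke: the contraction of $P'$ on such a $T$ ends at $s$ or at $f$, and a terminal vertex can never be the never-bottom element of a triple containing it (any path of $D$ ending at that vertex already witnesses this), so no auxiliary path is needed; the remaining triples inside $A\setminus\{s,f\}$ are compared with $P$ exactly as you do. You instead manufacture the witness $R=(a_1,\dots,a_{m-2},f,s)\in D$ via Lemma~\ref{ShuffleLemma} and compare $P'$ to $R$ on all triples containing at most one of $a_1,a_2$. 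Both arguments are sound and your case analysis is exhaustive, but your remark that the passage to $R$ is ``essential'' is overstated: on the troublesome triple $\{a_1,a_2,s\}$ the terminality of $s$ already rules out $s$ as the never-bottom element, with no shuffling required. What your route buys is a completely explicit witness path inside $D$ for every triple; what the paper's route buys is brevity and independence from Lemma~\ref{ShuffleLemma}.
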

\begin{proof}
On any contraction on a set containing $s$ and/or $f$, the path $P'$ will end at one of the two, which is allowed. Furthermore, on any contraction on a set $T\subseteq A\backslash \{s,f\}$, of size at least 3, the path $P'$ will terminate at the same vertex as $P$, which must be allowed. Thus $P'$ satisfies the ``never-bottom" conditions on all triples, and must be in $D$, as claimed.
\end{proof}

\begin{thm}
Let $\theta=(sf)\sigma$, such that $\sigma=(a_1 a_{m-2})$ or $\sigma=(a_2 a_{m-2})$. The number of maximal Arrow's single-peaked domains with\\
 $P=(s, a_1,\dots, a_{m-2},f)$ and $Q=\theta(P)$ as extremal paths is $\mathcal{P}(m-3)$. Moreover, all such domains are self-paired.
\end{thm}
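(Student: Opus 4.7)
The plan is to set up an explicit bijection between the maximal Arrow's single-peaked domains $D$ with $P$ and $Q=\theta(P)$ as extremals and the maximal Arrow's single-peaked domains on the $(m-3)$-element set $A'=A\setminus\{s,f,a_{m-2}\}$ sharing a prescribed extremal path; the latter count is exactly $\mathcal{P}(m-3)$ by definition. The map is the simplified domain contraction $\Phi(D)=D'(A')$.

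First I would identify the prescribed extremal of $\Phi(D)$. In the case $\sigma=(a_1\,a_{m-2})$ the restriction $P|_{A'}=(a_1,a_2,\ldots,a_{m-3})$ starts and ends at the terminals $\{a_1,a_{m-3}\}$ of $\Phi(D)$, so it is already extremal. In the case $\sigma=(a_2\,a_{m-2})$ the restriction $P|_{A'}$ begins at $a_1$, which is \emph{not} a terminal of $\Phi(D)$ (the terminals being $\{a_2,a_{m-3}\}$); instead I apply Lemma \ref{L:Shuff2} to $P$ to produce $P'=(a_2,a_1,a_3,\ldots,a_{m-2},f,s)\in D$, and the restriction $P'|_{A'}=(a_2,a_1,a_3,\ldots,a_{m-3})$ begins at the terminal $a_2$ and so is extremal in $\Phi(D)$. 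A comparison of the positions of each element in $P$ and $Q$ confirms that the never-bottom of every triple of $A'$ meeting an ``edge'' pair is forced by $P$ and $Q$ alone, so $\Phi(D)$ is well-defined as a maximal Arrow's single-peaked domain on $A'$ containing the prescribed extremal.

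To invert $\Phi$, I would rebuild $D$ from any such $E$ on $A'$ by three successive domain extensions: add $a_{m-2}$ to produce $E_1$ on $A'\cup\{a_{m-2}\}$, then add $f$ to produce $E_2$, then add $s$ to produce $D$. At each stage, applying Lemma \ref{ShuffleLemma} to the current extremal moves the current terminal to the last position and so supplies the exact prefix needed for the new extremal, while the unique-extension lemma invoked in the proof of Theorem \ref{idThm} pins down the extension given both new extremal paths. This yields $\Phi^{-1}(E)=D$ uniquely, so the total count equals $\mathcal{P}(m-3)$.

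Self-pairing of each such $D$ follows from Lemma \ref{selfPLemma}: for every triple $T$ one must show that $\theta$ maps the never-bottom element of $T$ to the never-bottom element of $\theta(T)$. This is automatic when $T\subseteq\{a_2,\ldots,a_{m-3}\}$ (where $\theta$ acts as the identity), and for all remaining triples the never-bottom is forced by $P$ and $Q$, so that its $\theta$-image is the forced never-bottom of $\theta(T)$ by the same forcing argument. The main obstacle is verifying the uniqueness of each iterated extension: one needs to confirm that the prefix produced by Lemma \ref{ShuffleLemma} at each stage matches the pair of extremals required at the next stage in order to apply the uniqueness lemma.
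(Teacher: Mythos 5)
Your strategy is essentially the paper's: both arguments count by contracting away terminal vertices until one reaches a family of maximal domains sharing a fixed extremal path (of which there are $\mathcal{P}(m-3)$), then lift back up through unique extensions, and both verify self-pairing by a triple-by-triple application of Lemma \ref{selfPLemma}. The differences are presentational. Where you contract three times, down to $A\setminus\{s,f,a_{m-2}\}$, the paper contracts only to $A\setminus\{s,f\}$, observes that the resulting domain has inherited permutation $(a_1a_{m-2})$ with $\sigma=id$, and cites Theorem \ref{idThm} for the count $\mathcal{P}(m-3)$ --- which is your third contraction in disguise. The paper handles $\sigma=(a_2a_{m-2})$ by relabelling $a_1\leftrightarrow a_2$ after invoking Lemma \ref{ShuffleLemma}, where you treat it directly via Lemma \ref{L:Shuff2}. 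For self-pairing the paper tabulates $P(T)$, $Q(T)$ and the never-bottom of every relevant triple, whereas your equivariance observation (since $\theta$ interchanges $P$ and $Q$ and $\theta^2=id$, a never-bottom forced by $\{P,Q\}$ on $T$ is carried by $\theta$ to the one forced on $\theta(T)$) makes half of each table redundant --- a genuine streamlining, though you still owe the check that $P(T)$ and $Q(T)$ terminate at distinct vertices for each triple not fixed pointwise by $\theta$, which is precisely what the tables record. Two cautions. First, in the case $\sigma=(a_2a_{m-2})$ the ``automatic'' triples are those contained in the fixed-point set $\{a_1,a_3,\dots,a_{m-3}\}$, not in $\{a_2,\dots,a_{m-3}\}$ as you state: for $T=\{a_1,x,y\}$ with $x,y\in\{a_3,\dots,a_{m-3}\}$ the never-bottom is \emph{not} forced (here $P(T)=Q(T)$), and self-pairing on such $T$ holds only because $\theta$ fixes $T$ pointwise; this is easily patched but as written your dichotomy misclassifies these triples. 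Second, the iterated unique-extension bookkeeping you flag as the main obstacle is real but routine, and is the same verification the paper leaves implicit in its two applications of the unique-extension lemma; note that the statement of that lemma is in fact missing from the paper (only its proof appears), so both arguments lean on a result that is nowhere formally stated.
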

\begin{proof}
Let $A=\{s,f,a_1, \dots , a_{m-2}\}$, and let $S$ be the set of maximal Arrow's single-peaked domains on $A\backslash\{s,f\}$ with extremal paths $P_1=(a_1,\dots, a_{m-2})$ and $Q_1=(a_{m-2},a_2, a_3,\dots, a_{m-3},a_1)$.  By Theorem \ref{idThm}, $|S|=\mathcal{P}(m-3)$. 
By Lemma \ref{L:UniqExt}, any domain in $S$ can be uniquely extended twice to a domain $D_1$ on $A$ with $P=(s,a_1,\dots, a_{m-2},f)$ and $Q=(f,a_{m-2},a_2,\dots, a_{m-3},a_1,s)$ as extremal paths.
By Lemma \ref{ShuffleLemma}, each $D\in S$ also contains\\
 $P_2=(a_2,a_1,a_3,\dots, a_{m-2})$ and $Q_2=(a_2,a_{m-2},a_3,\dots, a_{m-3},a_1)$, so each domain in $S$ can also be extended twice to $D_2$ with\\
 $P'=(s,a_2,a_1,a_3,\dots, a_{m-2},f)$ and $Q'=(f,a_2,a_{m-2}, a_3,\dots, a_{m-3},a_1,s)$ as extremal paths by Lemma \ref{L:UniqExt}. Note that with relabelling this gives $P$ and $\theta(P)$ with $\theta=(sf)(a_2 a_{m-2})$. Clearly, any domain with $P$ and $Q$  or $P'$ and $Q'$ as extremal paths will be in $S$, so there are exactly $\mathcal{P}(m-3)$ of each type of domain.
 Now it remains to be shown that all such domains are self-paired. First we consider the domain $D_1$ with $P$ and $Q$ as extremal paths. By Lemma \ref{selfPLemma}, since we have $\sigma=(a_1 a_{m-2}),$ we must consider triples containing $a_1$ and/or $a_{m-2}$. Let $x,y\in A\backslash\{s,f,a_1, a_{m-2}\}$ such that $x$ is before $y$ in $P$. The table below gives details of these triples, showing that each satisfies the conditions of Lemma \ref{selfPLemma}.\\

\begin{tabular}{|c|c|c|c|}
\hline
$T$&$P$&$Q$&never bottom\\ 
\hline
$\{a_1, x,y\}$&$(a_1,x,y)$&$(x,y,a_1)$&$x$\\
$\{a_{m-2}, x,y\}$&$(x,y,a_{m-2})$&$(a_{m-2},x,y)$&$x$\\
\hline

$\{a_1, s,x\}$&$(s,a_1,x)$&$(x,a_1,s)$&$a_1$\\
$\{a_{m-2}, f,x\}$
&$(x,a_{m-2},f)$
&$(f,a_{m-2},x)$&$a_{m-2}$\\
\hline

$\{a_1, f,x\}$
&$(a_1,x,f)$
&$(f,x,a_1)$&$x$\\
$\{a_{m-2}, s,x\}$
&$(s,x,a_{m-2})$&
$(a_{m-2},x,s)$&$x$\\
\hline

$\{a_1,a_{m-2}, s\}$&$(s,a_1,a_{m-2})$&$(a_{m-2},a_1,s)$&$a_1$\\
$\{a_1,a_{m-2}, f\}$&$(a_1,a_{m-2},f)$&$(f,a_{m-2},a_1)$&$a_{m-2}$\\
\hline

$\{a_1,a_{m-2}, x\}$&$(a_1,x,a_{m-2})$&$(a_{m-2},x,a_1)$&$x$\\
\hline
\end{tabular}\\

By Lemma \ref{selfPLemma}, if follows that $D_1$ is self-paired. Next, for $D_2$ we have the same permutation, and we get the same table as above, except that we must consider $a_2$ separately from $x$ and $y$. For these triples we get the following:\\

\begin{tabular}{|c|c|c|c|}
\hline
$T$&$P'$&$Q'$&never bottom\\ 
\hline
$\{a_1, a_2,x\}$&$(a_2,a_1,x)$&$(a_2,x,a_1)$&$a_2$\\
$\{a_{m-2}, a_2,x\}$&$(a_2,x,a_{m-2})$&$(a_2,a_{m-2},x)$&$a_2$\\
\hline

$\{a_1, s,a_2\}$&$(s,a_2,a_1)$&$(a_2,a_1,s)$&$a_2$\\
$\{a_{m-2}, f,a_2\}$
&$(a_2,a_{m-2},f)$
&$(f,a_2,a_{m-2})$&$a_2$\\
\hline

$\{a_1, f,a_2\}$
&$(a_2,a_1,f)$
&$(f,a_2,a_1)$&$a_2$\\
$\{a_{m-2}, s,a_2\}$
&$(s,a_2,a_{m-2})$&
$(a_2,a_{m-2},s)$&$a_2$\\
\hline

$\{a_1,a_{m-2}, a_2\}$&$(a_2,a_1,a_{m-2})$&$(a_2,a_{m-2},a_1)$&$a_2$\\
\hline
\end{tabular}\\

Thus $D_2$ is also self-paired, as required.
\end{proof}
\end{section}


\begin{section}{Isomorphism classes for small sets}
\label{S:small_sets}

The table below gives $\mathcal{P}(m)$, $\mathcal{SP}(m)$ and $\mathcal{N}(m)$, as defined in Theorem \ref{Thm:main}, for $m\in\{3,4,5,6,7,8\}$. Note that by Theorem \ref{idThm}, the number of isomorphism clasess of maximal Arrow's single-peaked domains with inherited permutation $(sf)$ is $\mathcal{P}(m-1)$, and so $\mathcal{SP}(m)-\mathcal{P}(m-1)$ gives the number of self-paired maximal Arrow's single-peaked domains with $\sigma_D$ of order 2. This number is all that needs to befound in order to calculate $\mathcal{N}(m)$, by Theorem \ref{Thm:main}.\\

\begin{tabular}{|c|c|c|c|c|}
 \hline
$m$ & $\mathcal{P}(m)$ &$\mathcal{SP}(m)-\mathcal{P}(m-1)$& $\mathcal{SP}(m)$ & $\mathcal{N}(m)$ \\
 \hline
3&1&0&1&1\\
4&2&1&2&2\\
5&$2^3$&2&4&6\\
6&$2^6$&$2^3$&16&40\\
7&$2^{10}$&$2^5$&98&560\\
8&$2^{15}$&$2^8$&$1280$&17024\\
\hline
\end{tabular}\\

We expand on this briefly in the tables below, which give the numbers of self-paired domains with terminal vertices $\{s,f\}$ and inherited permutation $(sf)\sigma$, for given permutations $\sigma$ of order 1 or 2.
For more details on isomorphism classes of maximal Arrow's single-peaked domains on sets of size 5 and 6, see Section \ref{S:append}.\\

\begin{tabular}{|c|c|}
\hline
 \multicolumn{2}{|c|}{$|A|=5$} \\
 \hline
$\sigma$&Self-paired\\
\hline
$(ac)$&1\\
$(bc)$&1\\
$(ab)$&0\\
\hline
id&2\\
\hline
Total&4\\
\hline
\hline
 \multicolumn{2}{|c|}{$|A|=6$} \\
 \hline
$\sigma$&Self-paired\\
\hline
$(ad)$&1\\
$(bd)$&1\\
$(ac)$&0\\
$(bc)$&0\\
$(cd)$&2\\
$(ab)$&0\\
\hline
$(ad)(bc)$&1\\
$(ac)(bd)$&1\\
$(ab)(cd)$&2\\
\hline
id&8\\
\hline
Total&16\\
\hline
\end{tabular}
\begin{tabular}{|c|c|}
\hline
 \multicolumn{2}{|c|}{$|A|=7$} \\
 \hline
$\sigma$&Self-paired\\
\hline
$(ae)$&2\\
$(be)$&2\\
$(ad)$&0\\
$(bd)$&0\\
$(ce)$&4\\
$(ac)$&0\\
$(bc)$&0\\
$(cd)$&0\\
$(de)$&8\\
$(ab)$&0\\
\hline
 \multicolumn{2}{c}{} \\
 \multicolumn{2}{c}{} \\
 \multicolumn{2}{c}{} \\
 \multicolumn{2}{c}{} \\
 \multicolumn{2}{c}{} \\
 \multicolumn{2}{c}{} \\
 \multicolumn{2}{c}{} \\
 \multicolumn{2}{c}{} \\
\end{tabular}
\begin{tabular}{|c|c|}
\hline
 \multicolumn{2}{|c|}{$|A|=7$} \\
 \hline
$\sigma$&Self-paired\\
\hline
$(ad)(be)$&1\\
$(ae)(bd)$&1\\
$(ae)(cd)$&1\\
$(be)(cd)$&1\\
$(ac)(be)$&0\\
$(ae)(bc)$&0\\
$(ad)(ce)$&2\\
$(bd)(ce)$&2\\
$(ac)(bd)$&0\\
$(ad)(bc)$&0\\
$(ab)(ce)$&0\\
$(ac)(de)$&4\\
$(bc)(de)$&4\\
$(ab)(cd)$&0\\
$(ab)(de)$&0\\
\hline
id&64\\
\hline
Total&96\\
\hline
 \multicolumn{2}{c}{} \\
\end{tabular}

\begin{tabular}{|c|c|}
\hline
 \multicolumn{2}{|c|}{$|A|=8$} \\
 \hline
$\sigma$&Self-paired\\
\hline
$(af)$&8\\
$(bf)$&8\\
$(ac)$&0\\
$(bc)$&0\\
$(ad)$&0\\
$(bd)$&0\\
$(ae)$&0\\
$(be)$&0\\
$(cd)$&0\\
$(de)$&0\\
$(ce)$&0\\
$(cf)$&16\\
$(df)$&32\\
$(ef)$&64\\
$(ab)$&0\\
\hline
$(ab)(cd)$&0\\
$(ab)(ce)$&0\\
$(ab)(cf)$&0\\
$(ab)(de)$&0\\
$(ab)(df)$&0\\
$(ab)(ef)$&0\\
$(ac)(bd)$&0\\
$(ac)(be)$&0\\
$(ac)(bf)$&0\\
$(ac)(de)$&0\\
$(ac)(df)$&0\\
\hline
\end{tabular}
\begin{tabular}{|c|c|}
\hline
 \multicolumn{2}{|c|}{$|A|=8$} \\
 \hline
$\sigma$&Self-paired\\
\hline
$(ac)(ef)$&0\\
$(bc)(de)$&0\\
$(bc)(df)$&0\\
$(bc)(ef)$&0\\
$(ad)(bc)$&0\\
$(ad)(be)$&0\\
$(ad)(bf)$&0\\
$(ad)(ce)$&0\\
$(ad)(cf)$&0\\
$(ad)(ef)$&8\\
$(bd)(ce)$&0\\
$(bd)(cf)$&0\\
$(bd)(ef)$&8\\
$(ae)(bc)$&0\\
$(ae)(bd)$&0\\
$(ae)(bf)$&1\\
$(ae)(cd)$&0\\
$(ae)(cf)$&2\\
$(ae)(df)$&4\\
$(be)(cd)$&0\\
$(be)(cf)$&2\\
$(be)(df)$&4\\
$(af)(bc)$&0\\
$(af)(bd)$&0\\
$(af)(be)$&1\\
$(af)(cd)$&0\\
\hline
\end{tabular}
\begin{tabular}{|c|c|}
\hline
 \multicolumn{2}{|c|}{$|A|=8$} \\
 \hline
$\sigma$&Self-paired\\
\hline
$(af)(ce)$&1\\
$(af)(de)$&2\\
$(bf)(cd)$&0\\
$(bf)(ce)$&1\\
$(bf)(de)$&2\\
$(cd)(ef)$&16\\
$(ce)(df)$&8\\
$(cf)(de)$&4\\
\hline
$(ab)(cd)(ef)$&16\\
$(ab)(ce)(df)$&8\\
$(ab)(cf)(de)$&4\\
$(ac)(bd)(ef)$&8\\
$(ac)(be)(df)$&4\\
$(ac)(bf)(de)$&2\\
$(ad)(bc)(ef)$&8\\
$(ae)(bc)(df)$&4\\
$(af)(bc)(de)$&2\\
$(ad)(be)(cf)$&2\\
$(ad)(bf)(ce)$&1\\
$(ae)(bd)(cf)$&2\\
$(ae)(bf)(cd)$&1\\
$(af)(bd)(ce)$&1\\
$(af)(be)(cd)$&1\\
\hline
id&1024\\
\hline
Total&1280\\
\hline
 \multicolumn{2}{c}{} \\
\end{tabular}\\
\end{section}

\begin{section}{Single-Dipped Domains and future directions.}
\label{S:Sdip}
\begin{defn}
{\em Let $D$ be a Condorcet domain on a set $A$. We say that $D$ is a} single-dipped domain {\em if $D$ has a ``never-top" element on every triple $T\subseteq A$.}
\end{defn}

\begin{thm}
\label{L:peaktodip}
Let $S_1$ be the set of Arrow's single-peaked domains on a set $A$ and $S_2$ be the set of single-dipped domains on $A$. The elements of $S_1$ are in one-to-one correspondence with the elements of $S_2$.
\end{thm}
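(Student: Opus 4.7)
The plan is to exhibit an explicit bijection via order-reversal: given a domain $D\in S_1$, let $\phi(D)=\{v^{r}\st v\in D\}$, where $v^{r}$ denotes the reverse of the linear order $v$ (equivalently, the reverse of the associated Hamilton directed path). I claim $\phi\st S_1\to S_2$ is a bijection with inverse also given by order-reversal.

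First I would check that $\phi(D)$ is a Condorcet domain whenever $D$ is. The three orders of a Condorcet triple
$$a_1\succ a_2\succ a_3,\quad a_2\succ a_3\succ a_1,\quad a_3\succ a_1\succ a_2$$
reverse to
$$a_3\succ a_2\succ a_1,\quad a_1\succ a_3\succ a_2,\quad a_2\succ a_1\succ a_3,$$
which is again a Condorcet triple (the cyclic direction is simply flipped, as one can verify by relabelling $b_i=a_{4-i}$). Hence $D$ contains a Condorcet triple if and only if $\phi(D)$ does, so $\phi$ sends Condorcet domains to Condorcet domains.

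Next I would verify that $\phi$ swaps the never-bottom and never-top conditions triplewise. Fix a triple $\{a,b,c\}$. By definition, $a$ is never-bottom in $D$ on this triple precisely when neither $b\succ c\succ a$ nor $c\succ b\succ a$ appears in $D$. Reversing, this is equivalent to neither $a\succ c\succ b$ nor $a\succ b\succ c$ appearing in $\phi(D)$, which is exactly the condition that $a$ is never-top in $\phi(D)$ on $\{a,b,c\}$. Therefore $D$ satisfies a never-bottom condition on every triple (i.e.\ $D\in S_1$) if and only if $\phi(D)$ satisfies a never-top condition on every triple (i.e.\ $\phi(D)\in S_2$). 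The same argument, applied in reverse, shows that $\phi'\st S_2\to S_1$ defined by $\phi'(E)=\{v^{r}\st v\in E\}$ lands in $S_1$.

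Finally, since reversing a linear order twice returns the original order, we have $\phi'\circ\phi=\Id{S_1}$ and $\phi\circ\phi'=\Id{S_2}$, so $\phi$ is a bijection. There is no real obstacle here: the whole proof is a symmetry argument, and the only point requiring care is the bookkeeping check that reversing interchanges ``never-bottom" with ``never-top" on every triple while preserving the Condorcet-triple condition, both of which reduce to the three-element computation above.
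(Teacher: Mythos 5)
Your proposal is correct and uses exactly the same map as the paper: the order-reversal bijection $\phi$, which sends never-bottom conditions to never-top conditions on each triple and is self-inverse. The only difference is that you additionally spell out the check that reversal preserves the Condorcet (no-Condorcet-triple) property, a detail the paper leaves implicit; otherwise the arguments coincide.
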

\begin{proof}
 Let $\phi : S_1\to S_2$ be a function which acts on $D\in S_1$ by reversing each path in $D$. Since $D$ is an Arrow's single-peaked domain, it has a ``never-bottom" element in every triple $T\subseteq A$. Thus $\phi(D)$ must have a ``never-top" element on $T$. Hence $\phi(D)$ is a single-dipped domain. Clearly $\phi$ is self-inverse and is therefore a one-to-one correspondence, as required.
\end{proof}
The following is a direct consequence of Theorem \ref{L:peaktodip}. 
\begin{cor}
The number of maximal Arrow's single-peaked domains on a set $A$ is equal to the number of maximal single-dipped domains on $A$.
\end{cor}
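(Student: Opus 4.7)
The plan is to leverage the bijection $\phi \colon S_1 \to S_2$ from Theorem \ref{L:peaktodip} and show that it restricts to a bijection between maximal Arrow's single-peaked domains and maximal single-dipped domains. Since $\phi$ acts by reversing every path in a domain, it clearly preserves set-theoretic inclusion: for any two collections $D, D'$ of linear orders on $A$, we have $D \subseteq D'$ if and only if $\phi(D) \subseteq \phi(D')$, because reversal is an involution on $\mathcal{L}(A)$.

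The main step, then, is to argue that $\phi$ preserves maximality. Suppose $D \in S_1$ is a maximal Arrow's single-peaked domain, and suppose for contradiction that $\phi(D)$ is not a maximal single-dipped domain. Then there exists a single-dipped domain $E \in S_2$ with $\phi(D) \subsetneq E$. Applying $\phi$ (which is self-inverse by Theorem \ref{L:peaktodip}), we obtain $D = \phi(\phi(D)) \subsetneq \phi(E) \in S_1$, contradicting the maximality of $D$. The reverse direction is symmetric: if $E \in S_2$ is maximal and $\phi(E) \subsetneq D'$ for some $D' \in S_1$, applying $\phi$ again yields $E \subsetneq \phi(D') \in S_2$, contradicting maximality of $E$.

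Therefore $\phi$ restricts to a bijection between the maximal elements of $S_1$ and the maximal elements of $S_2$, and the two sets have the same cardinality. I do not anticipate any genuine obstacle here; the only thing worth being careful about is that "maximality" in both settings is taken with respect to the ambient class of Condorcet domains of the appropriate type, and that $\phi$ sends Condorcet domains to Condorcet domains (which is implicit in the proof of Theorem \ref{L:peaktodip}, since reversing every path in $D$ reverses the majority relation and hence preserves acyclicity).
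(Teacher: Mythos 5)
Your proposal is correct and matches the paper's intent: the paper states this corollary as a direct consequence of Theorem \ref{L:peaktodip} without further argument, and your write-up simply makes explicit the standard observation that the reversal bijection $\phi$ preserves inclusion (being an involution on $\mathcal{L}(A)$) and therefore carries maximal domains to maximal domains. Nothing is missing.
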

Similarly, many other observations in this paper and in the literature may be transcribed from an observation about Arrow's single-peaked domains to an observation about single-dipped domains.\\

The problem of counting maximal Arrow's single-peaked domains has been reduced to counting self-paired maximal Arrow's single-peaked domains, for which we have counted the number with   inherited permutations $(a_1a_2)(sf)$, $(a_1a_{m-2})(sf)$, $(a_2a_{m-2})(sf)$, $(sf)$, and (trivially) for any permutations which has order greater than 2, or which satisfies the conditions of Corollary \ref{cor:SP}. For larger sets, this is a relatively small part of the problem. We have noticed, however, that the number of self-paired domains with $\sigma\neq id$ seems to be relatively small. We predict that the domains with $\sigma=id$ make up at least half of all of the self-paired domains on a given set, and make the following conjecture.

\begin{conj}
Let $\mathcal{N}(m)$ be the number of non-isomorphic maximal Arrow's single-peaked domains on a set $A$ of size $m$. Then the following bound holds:
$$\frac{1}2(\mathcal{P}(m) +\mathcal{P}(m-1))\leq \mathcal{N}(m)\leq \frac{1}2(\mathcal{P}(m) +2\mathcal{P}(m-1)),$$
where $\mathcal{P}(3)=1$ and $\mathcal{P}(m)=2^{m-3}\mathcal{P}(m-1)$ for $m\geq 4$.
\end{conj}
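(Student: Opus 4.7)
The plan is to work with the equivalent formulation obtained from Theorem \ref{Thm:main}(2), namely
\[
\mathcal{P}(m-1) \leq \mathcal{SP}(m) \leq 2\mathcal{P}(m-1),
\]
and prove the two bounds separately. The lower bound is immediate from Theorem \ref{idThm}: that result produces exactly $\mathcal{P}(m-1)$ non-isomorphic self-paired maximal Arrow's single-peaked domains with inherited permutation $(sf)$, so $\mathcal{SP}(m) \geq \mathcal{P}(m-1)$ with no further work.

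The upper bound is the substantive part. It asserts that the number of isomorphism classes of self-paired maximal Arrow's single-peaked domains with $\sigma_D \neq id$ is at most $\mathcal{P}(m-1)$. By Lemma \ref{SPLemma} we may restrict to involutions $\sigma$, and by Corollary \ref{cor:SP} many of these involutions admit no self-paired domain at all with a given extremal path $P$. My plan is to fix an extremal path $P$ and partition the self-paired domains sharing $P$ by the value of $\sigma_D$, then bound each class and sum.

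For each involution $\sigma\neq id$, the first step is to contract: if $D$ is self-paired with inherited permutation $(sf)\sigma$, then Lemma \ref{ContractionLemma} gives a maximal Arrow's single-peaked domain $D'_f$ on $A\setminus\{f\}$. The self-pairing condition of Lemma \ref{selfPLemma} forces $\sigma$ to respect the never-bottom assignments on every triple, so it descends to a consistent symmetry of $D'_f$. The second step is to use the extension lemmas (Lemma \ref{L:Ext}, and the unique-extension statement invoked in Theorem \ref{idThm}) together with Lemma \ref{ShuffleLemma} to show that, once $D'_f$ and the image of $P$ in the contraction are fixed, the self-pairing requirement pins down so much of how $f$ re-enters the domain that the resulting map from the set of self-paired $D$ with $\sigma_D\neq id$ into the $\mathcal{P}(m-1)$ domains sharing a common extremal path on $m-1$ elements is injective. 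Combined with the identity case, this would give $\mathcal{SP}(m) \leq 2\mathcal{P}(m-1)$.

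The hard part will be uniformity. The cases in Section \ref{S:theta_count} cover only involutions of a very restricted form, while the tables in Section \ref{S:small_sets} reveal a quite intricate dependence of the self-paired count on the cycle type of $\sigma$ and on the positions of its moved elements in $P$ (compare $(ef)$, contributing $64$ on $|A|=8$, with $(ae)(bf)$, contributing $1$). A case-by-case attack seems prohibitive; the challenge is therefore to isolate a single structural invariant, likely derived from how $\sigma$ interacts with the never-bottom assignments of triples containing both $s$ and $f$, that simultaneously controls all involutions and makes the injection above fall out without separate bookkeeping for each cycle type. I expect this uniform descent step to be the genuine obstacle to converting the numerical evidence of Section \ref{S:small_sets} into a proof.
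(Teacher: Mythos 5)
This statement is stated in the paper as a \emph{conjecture}: the author explicitly offers no proof, only the numerical evidence of Section \ref{S:small_sets} and the heuristic that domains with $\sigma=id$ appear to account for at least half of all self-paired domains. So there is no ``paper's own proof'' to compare against, and your proposal should be judged on its own terms. Your reduction via Theorem \ref{Thm:main}(2) to the equivalent statement $\mathcal{P}(m-1)\leq \mathcal{SP}(m)\leq 2\mathcal{P}(m-1)$ is correct, and the lower bound does follow immediately from Theorem \ref{idThm}, which exhibits $\mathcal{P}(m-1)$ self-paired domains with $\sigma_D=id$; that half of the conjecture is in fact a theorem given the paper's results.

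The upper bound, however, is the entire content of the conjecture, and your proposal does not prove it: the claimed injection from the self-paired domains with $\sigma_D\neq id$ into the $\mathcal{P}(m-1)$ domains on $m-1$ elements is never constructed, and you yourself flag the ``uniform descent step'' as an unresolved obstacle. There is also a concrete problem with the descent you do sketch. The inherited permutation $\theta=(sf)\sigma$ sends paths ending in $f$ to paths ending in $s$, so $\theta$ maps $D_f$ onto $D_s$ rather than preserving $D_f$; consequently the self-pairing condition does not ``descend to a consistent symmetry of $D'_f$'' as claimed, but instead gives an intertwining between the two contractions $D'_f$ and $D'_s$. Moreover, even if for each fixed involution $\sigma$ you could injectively tag the corresponding self-paired domains, you would still need the images across \emph{different} $\sigma$ to be disjoint (or a single global injection), since the bound $\mathcal{P}(m-1)$ must absorb the sum over all involutions; the tables for $|A|=7,8$ show this sum distributed over many cycle types in a way your sketch does not control. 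As it stands the proposal establishes only the lower inequality; the upper inequality remains open, exactly as in the paper.
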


While this paper reduces the problem of counting maximal Arrow's single-peaked domains (and thus maximal single-dipped domains), there is still much yet to be investigated. Furthermore, the problem of counting other types of Condorcet domains is, to the knowledge of the author, still open. 

\end{section}

\begin{section}{Appendix: Domains on a sets of size 5 and 6}
\label{S:append}
In this section we give a representative of each isomorphism class of maximal Arrow's single-peaked domains on sets of size 5 and 6. For compactness, we denote the path $(a_1, a_2, \dots, a_m)$ by $a_1a_2\dots a_m$. The column labelled ``Paths" gives the paths needed to define the given domain.

\begin{longtable}{|p{1.3cm}|p{1.3cm}|p{10cm}|}
\hline
 \multicolumn{3}{|c|}{Domains on a set of size 5} \\
 \hline
Paths&$\sigma$&Linear orders\\
 \hline
$sabcf$, $fcbas$&$(ac)$&$sabcf$, $asbcf$, $bascf$, $abscf$, $cbasf$, $bcasf$, $bacsf$, $abcsf$, $fcbas$, $cfbas$, $cbfas$, $bcfas$, $cbafs$, $bcafs$, $bacfs$, $abcfs$\\
\hline
$sabcf$, $fbcas$&$(abc) $&$sabcf$, $asbcf$, $bascf$, $abscf$, $cbasf$, $bcasf$, $bacsf$, $abcsf$, $fbcas$, $bfcas$, $cbfas$, $bcfas$, $cbafs$, $bcafs$, $bacfs$, $abcfs$\\
\hline
$sabcf$, $facbs$&$(bc) $&$sabcf$, $asbcf$, $bascf$, $abscf$, $bacsf$, $cabsf$, $acbsf$, $abcsf$, $facbs$, $afcbs$, $cafbs$, $acfbs$, $bacfs$, $cabfs$, $acbfs$, $abcfs$\\
\hline
$sabcf$, $fbacs$, $cbasf$&$(ab) $&$sabcf$, $asbcf$, $bascf$, $abscf$, $cbasf$, $bcasf$, $bacsf$, $abcsf$, $fbacs$, $bfacs$, $bafcs$, $abfcs$, $cbafs$, $bcafs$, $bacfs$, $abcfs$\\
\hline
$sabcf$, $fabcs$, $cbasf$&$id$&$sabcf$, $asbcf$, $bascf$, $abscf$, $cbasf$, $bcasf$, $bacsf$, $abcsf$, $fabcs$, $afbcs$, $bafcs$, $abfcs$, $cbafs$, $bcafs$, $bacfs$, $abcfs$\\
\hline
$sabcf$, $fabcs$ $cabsf$&$id$&$sabcf$, $asbcf$, $bascf$, $abscf$, $bacsf$, $cabsf$, $acbsf$, $abcsf$, $fabcs$, $afbcs$, $bafcs$, $abfcs$, $bacfs$, $cabfs$, $acbfs$, $abcfs$\\
\hline
\end{longtable}

Note that the pair $(sabcf,fbacs)$ of extremal paths has two non-equal domains, but they are isomorphic. 

   \begin{longtable} { |p{1.3cm}|p{1.3cm}|p{10cm}|}
 \hline
 \multicolumn{3}{|c|}{Domains on a set of size 6} \\
 \hline
Paths&$\sigma$&Linear orders\\
 \hline
$sabcdf$, $fdcbas$&$(ad)(bc) $&$sabcdf$, $asbcdf$, $bascdf$, $abscdf$, $cbasdf$, $bcasdf$, $bacsdf$, $abcsdf$, $dcbasf$, $cdbasf$, $cbdasf$, $cbadsf$, $bcdasf$, $bcadsf$, $bacdsf$, $abcdsf$, $fdcbas$, $dfcbas$, $dcfbas$, $cdfbas$, $dcbfas$, $cdbfas$, $cbdfas$, $bcdfas$, $dcbafs$, $cdbafs$, $cbdafs$, $cbadfs$, $bcdafs$, $bcadfs$, $bacdfs$, $abcdfs$\\
\hline
$sabcdf$, $fcdbas$&$(acbd) $&$sabcdf$, $asbcdf$, $bascdf$, $abscdf$, $cbasdf$, $bcasdf$, $bacsdf$, $abcsdf$, $dcbasf$, $cdbasf$, $cbdasf$, $cbadsf$, $bcdasf$, $bcadsf$, $bacdsf$, $abcdsf$, $fcdbas$, $cfdbas$, $dcfbas$, $cdfbas$, $dcbfas$, $cdbfas$, $cbdfas$, $bcdfas$, $dcbafs$, $cdbafs$, $cbdafs$, $cbadfs$, $bcdafs$, $bcadfs$, $bacdfs$, $abcdfs$\\
\hline
$sabcdf$, $fdbcas$&$(ad)$&$sabcdf$, $asbcdf$, $bascdf$, $abscdf$, $cbasdf$, $bcasdf$, $bacsdf$, $abcsdf$, $cbdasf$, $cbadsf$, $dbcasf$, $bdcasf$, $bcdasf$, $bcadsf$, $bacdsf$, $abcdsf$, $fdbcas$, $dfbcas$, $dbfcas$, $bdfcas$, $cbdfas$, $dbcfas$, $bdcfas$, $bcdfas$, $cbdafs$, $cbadfs$, $dbcafs$, $bdcafs$, $bcdafs$, $bcadfs$, $bacdfs$, $abcdfs$\\
\hline
$sabcdf$, $fbdcas$&$(abd)$&$sabcdf$, $asbcdf$, $bascdf$, $abscdf$, $cbasdf$, $bcasdf$, $bacsdf$, $abcsdf$, $cbdasf$, $cbadsf$, $dbcasf$, $bdcasf$, $bcdasf$, $bcadsf$, $bacdsf$, $abcdsf$, $fbdcas$, $bfdcas$, $dbfcas$, $bdfcas$, $cbdfas$, $dbcfas$, $bdcfas$, $bcdfas$, $cbdafs$, $cbadfs$, $dbcafs$, $bdcafs$, $bcdafs$, $bcadfs$, $bacdfs$, $abcdfs$\\
\hline
$sabcdf$, $fcdabs$&$(ac)(bd) $&$sabcdf$, $asbcdf$, $bascdf$, $abscdf$, $bacsdf$, $cabsdf$, $acbsdf$, $abcsdf$, $bacdsf$, $dcabsf$, $cdabsf$, $cadbsf$, $cabdsf$, $acdbsf$, $acbdsf$, $abcdsf$, $fcdabs$, $cfdabs$, $dcfabs$, $cdfabs$, $dcafbs$, $cdafbs$, $cadfbs$, $acdfbs$, $bacdfs$, $dcabfs$, $cdabfs$, $cadbfs$, $cabdfs$, $acdbfs$, $acbdfs$, $abcdfs$\\
\hline
$sabcdf$, $fadcbs$&$(bd) $&$sabcdf$, $asbcdf$, $bascdf$, $abscdf$, $bacsdf$, $cabsdf$, $acbsdf$, $abcsdf$, $bacdsf$, $cadbsf$, $cabdsf$, $dacbsf$, $adcbsf$, $acdbsf$, $acbdsf$, $abcdsf$, $fadcbs$, $afdcbs$, $dafcbs$, $adfcbs$, $cadfbs$, $dacfbs$, $adcfbs$, $acdfbs$, $bacdfs$, $cadbfs$, $cabdfs$, $dacbfs$, $adcbfs$, $acdbfs$, $acbdfs$, $abcdfs$\\
\hline
$sabcdf$, $fcbdas$, $dcbasf$&$(acd)$&$
sabcdf$, $asbcdf$, $abcsdf$, $abcdsf$, $abcdfs$, $abscdf$, $cbdasf$, $fcbdas$, $cfbdas$, $cbfdas$, $cbdfas$, $cbdafs$, $bascdf$, $bacsdf$, $bacdsf$, $bacdfs$, $bcdasf$, $bcdfas$, $bcfdas$, $bcdafs$, $cbasdf$, $cbadsf$, $cbadfs$, $bcasdf$, $bcadsf$, $bcadfs$, $dcbasf$, $dcbfas$, $dcbafs$, $cdbasf$, $cdbfas$, $cdbafs$\\
\hline
$sabcdf$, $fcbdas$, $dbcasf$&$(acd)$&$
sabcdf$, $asbcdf$, $abscdf$, $abcsdf$, $abcdsf$, $abcdfs$, $cbdasf$, $fcbdas$, $cbfdas$, $cfbdas$, $cbdfas$, $cbdafs$, $bascdf$, $bacsdf$, $bacdsf$, $bacdfs$, $bcdasf$, $ bcfdas$, $bcdfas$, $bcdafs$, $cbasdf$, $cbadsf$, $cbadfs$, $bcasdf$, $bcadsf$, $bcadfs$, $dbcasf$, $dbcfas$, $dbcafs$, $bdcasf$, $bdcfas$, $bdcafs$\\
\hline
$sabcdf$, $fcbads$, $dcbasf $&$(ac)$&$sabcdf$, $asbcdf$, $abscdf$, $abcsdf$, $abcdsf$, $abcfds$, $abcdfs$, $
cbasdf$, $cbadsf$, $cfbads$, $cbfads$, $cbafds$, $cbadfs$, $fcbads$, $
bascdf$, $bcasdf$, $bacsdf$, $dcbasf$, $cdbasf$, $cbdasf$, $bcdasf$, $bcadsf$, $bacdsf$, $bcfads$, $bcafds$, $bacfds$, $dcbafs$, $cdbafs$, $cbdafs$, $bcdafs$, $bcadfs$, $bacdfs$\\
\hline
$sabcdf$, $fcbads$, $dbcasf $&$(ac)$&$sabcdf$, $asbcdf$, $abscdf$, $abcsdf$, $abcdsf$, $abcfds$, $abcdfs$, $
cbasdf$, $cbadsf$, $fcbads$, $cfbads$, $cbfads$, $cbafds$, $cbadfs$, $
 bascdf$, $bcasdf$, $bacsdf$, $cbdasf$, $dbcasf$, $bdcasf$, $bcdasf$, $bcadsf$, $bacdsf$, $bcfads$, $ bcafds$, $bacfds$, $cbdafs$, $dbcafs$, $bdcafs$, $bcdafs$, $bcadfs$, $bacdfs$\\
\hline
$sabcdf$, $fbdacs$, $cbadsf $&$(abdc)$&$sabcdf$, $asbcdf$, $bascdf$, $abscdf$, $cbasdf$, $bcasdf$, $bacsdf$, $abcsdf$, $cbadsf$, $bcadsf$, $dbacsf$, $bdacsf$, $badcsf$, $bacdsf$, $abdcsf$, $abcdsf$, $fbdacs$, $bfdacs$, $dbfacs$, $bdfacs$, $dbafcs$, $bdafcs$, $badfcs$, $abdfcs$, $cbadfs$, $bcadfs$, $dbacfs$, $bdacfs$, $badcfs$, $bacdfs$, $abdcfs$, $abcdfs$\\
\hline
$sabcdf$, $fbdacs$, $cabdsf$&$(abdc)$&$sabcdf$, $asbcdf$, $bascdf$, $abscdf$, $bacsdf$, $cabsdf$, $acbsdf$, $abcsdf$, $dbacsf$, $bdacsf$, $badcsf$, $bacdsf$, $cabdsf$, $acbdsf$, $abdcsf$, $abcdsf$, $fbdacs$, $bfdacs$, $dbfacs$, $bdfacs$, $dbafcs$, $bdafcs$, $badfcs$, $abdfcs$, $dbacfs$, $bdacfs$, $badcfs$, $bacdfs$, $cabdfs$, $acbdfs$, $abdcfs$, $abcdfs$\\
\hline
$sabcdf$, $fbcdas$, $dcbasf$&$(abcd)$&$
sabcdf$, $asbcdf$, $abscdf$, $abcdsf$, $abcdfs$, $abcsdf$, $
bcdasf$, $fbcdas$, $bfcdas$, $bcfdas$, $bcdfas$, $bcdafs$, $
bascdf$, $cbasdf$, $bcasdf$, $bacsdf$, $cbfdas$, $dcbfas$, $cdbfas$, $cbdfas$, $
dcbasf$, $cdbasf$, $cbdasf$, $cbadsf$, $bcadsf$, $bacdsf$, $
dcbafs$, $cdbafs$, $cbdafs$, $cbadfs$, $bcadfs$, $bacdfs$\\
\hline
$sabcdf$, $fbcdas$, $dbcasf $&$(abcd)$&$
sabcdf$, $asbcdf$, $abscdf$, $abcdfs$, $abcdsf$, $abcsdf$, $
bcdasf$, $fbcdas$, $bfcdas$, $bcdfas$, $bcdafs$, $bcfdas$, $
bascdf$, $cbasdf$, $bcasdf$, $bacsdf$, $cbfdas$, $cbdfas$, $dbcfas$, $bdcfas$, $
cbdasf$, $cbadsf$, $dbcasf$, $bdcasf$, $bcadsf$, $bacdsf$, $
cbdafs$, $cbadfs$, $dbcafs$, $bdcafs$, $bcadfs$, $bacdfs$\\
\hline
$sabcdf$, $facdbs$, $dcabsf $&$(bcd)$&$
sabcdf$, $asbcdf$, $bascdf$, $abscdf$, $bacsdf$, $cabsdf$, $acbsdf$, $abcsdf$, $bacdsf$, $dcabsf$, $cdabsf$, $cadbsf$, $cabdsf$, $acdbsf$, $acbdsf$, $abcdsf$, $facdbs$, $afcdbs$, $cafdbs$, $acfdbs$, $dcafbs$, $cdafbs$, $cadfbs$, $acdfbs$, $bacdfs$, $dcabfs$, $cdabfs$, $cadbfs$, $cabdfs$, $acdbfs$, $acbdfs$, $abcdfs$\\
\hline
$sabcdf$, $facdbs$, $dacbsf$&$(bcd)$&$
sabcdf$, $asbcdf$, $bascdf$, $abscdf$, $bacsdf$, $cabsdf$, $acbsdf$, $abcsdf$, $bacdsf$, $cadbsf$, $cabdsf$, $dacbsf$, $adcbsf$, $acdbsf$, $acbdsf$, $abcdsf$, $facdbs$, $afcdbs$, $cafdbs$, $acfdbs$, $cadfbs$, $dacfbs$, $adcfbs$, $acdfbs$, $bacdfs$, $cadbfs$, $cabdfs$, $dacbfs$, $adcbfs$, $acdbfs$, $acbdfs$, $abcdfs$\\
\hline
$sabcdf$, $facbds$, $dbacsf$&$(bc)$&$
sabcdf$, $asbcdf$, $bascdf$, $abscdf$, $bacsdf$, $cabsdf$, $acbsdf$, $abcsdf$, $dbacsf$, $bdacsf$, $badcsf$, $bacdsf$, $cabdsf$, $acbdsf$, $abdcsf$, $abcdsf$, $facbds$, $afcbds$, $cafbds$, $acfbds$, $bacfds$, $cabfds$, $acbfds$, $abcfds$, $dbacfs$, $bdacfs$, $badcfs$, $bacdfs$, $cabdfs$, $acbdfs$, $abdcfs$, $abcdfs$\\
\hline
$sabcdf$, $facbds$, $dacbsf$&$(bc)$&$
sabcdf$, $asbcdf$, $bascdf$, $abscdf$, $bacsdf$, $cabsdf$, $acbsdf$, $abcsdf$, $bacdsf$, $cadbsf$, $cabdsf$, $dacbsf$, $adcbsf$, $acdbsf$, $acbdsf$, $abcdsf$, $facbds$, $afcbds$, $cafbds$, $acfbds$, $bacfds$, $cabfds$, $acbfds$, $abcfds$, $bacdfs$, $cadbfs$, $cabdfs$, $dacbfs$, $adcbfs$, $acdbfs$, $acbdfs$, $abcdfs$\\
\hline
$sabcdf$, $fabdcs$, $cbadsf$, $dbacsf $&$(dc)$&$
sabcdf$, $asbcdf$, $bascdf$, $abscdf$, $cbasdf$, $bcasdf$, $bacsdf$, $abcsdf$, $cbadsf$, $bcadsf$, $dbacsf$, $bdacsf$, $badcsf$, $bacdsf$, $abdcsf$, $abcdsf$, $fabdcs$, $afbdcs$, $bafdcs$, $abfdcs$, $dbafcs$, $bdafcs$, $badfcs$, $abdfcs$, $cbadfs$, $bcadfs$, $dbacfs$, $bdacfs$, $badcfs$, $bacdfs$, $abdcfs$, $abcdfs$\\
\hline
$sabcdf$, $fabdcs$, $cbadsf$, $dabcsf $&$(dc)$&$
sabcdf$, $asbcdf$, $bascdf$, $abscdf$, $cbasdf$, $bcasdf$, $bacsdf$, $abcsdf$, $cbadsf$, $bcadsf$, $badcsf$, $bacdsf$, $dabcsf$, $adbcsf$, $abdcsf$, $abcdsf$, $fabdcs$, $afbdcs$, $bafdcs$, $abfdcs$, $badfcs$, $dabfcs$, $adbfcs$, $abdfcs$, $cbadfs$, $bcadfs$, $badcfs$, $bacdfs$, $dabcfs$, $adbcfs$, $abdcfs$, $abcdfs$\\
\hline
$sabcdf$, $fabdcs$, $cabdsf$, $dabcsf $&$(dc)$&$
sabcdf$, $asbcdf$, $bascdf$, $abscdf$, $bacsdf$, $cabsdf$, $acbsdf$, $abcsdf$, $badcsf$, $bacdsf$, $cabdsf$, $acbdsf$, $dabcsf$, $adbcsf$, $abdcsf$, $abcdsf$, $fabdcs$, $afbdcs$, $bafdcs$, $abfdcs$, $badfcs$, $dabfcs$, $adbfcs$, $abdfcs$, $badcfs$, $bacdfs$, $cabdfs$, $acbdfs$, $dabcfs$, $adbcfs$, $abdcfs$, $abcdfs$\\
\hline
$sabcdf$, $fbadcs$, $cbadsf$, $dbacsf$&$(ab)(dc)$&$
sabcdf$, $asbcdf$, $bascdf$, $abscdf$, $cbasdf$, $bcasdf$, $bacsdf$, $abcsdf$, $cbadsf$, $bcadsf$, $dbacsf$, $bdacsf$, $badcsf$, $bacdsf$, $abdcsf$, $abcdsf$, $fbadcs$, $bfadcs$, $bafdcs$, $abfdcs$, $dbafcs$, $bdafcs$, $badfcs$, $abdfcs$, $cbadfs$, $bcadfs$, $dbacfs$, $bdacfs$, $badcfs$, $bacdfs$, $abdcfs$, $abcdfs$\\
\hline
$sabcdf$, $fbadcs$, $cbadsf$, $dabcsf $&$(ab)(dc)$&$
sabcdf$, $asbcdf$, $bascdf$, $abscdf$, $cbasdf$, $bcasdf$, $bacsdf$, $abcsdf$, $cbadsf$, $bcadsf$, $badcsf$, $bacdsf$, $dabcsf$, $adbcsf$, $abdcsf$, $abcdsf$, $fbadcs$, $bfadcs$, $bafdcs$, $abfdcs$, $badfcs$, $dabfcs$, $adbfcs$, $abdfcs$, $cbadfs$, $bcadfs$, $badcfs$, $bacdfs$, $dabcfs$, $adbcfs$, $abdcfs$, $abcdfs$\\
\hline
$sabcdf$, $fbadcs$, $dbacsf$, $cabdsf $&$(ab)(dc)$&$
sabcdf$, $asbcdf$, $bascdf$, $abscdf$, $bacsdf$, $cabsdf$, $acbsdf$, $abcsdf$, $dbacsf$, $bdacsf$, $badcsf$, $bacdsf$, $cabdsf$, $acbdsf$, $abdcsf$, $abcdsf$, $fbadcs$, $bfadcs$, $bafdcs$, $abfdcs$, $dbafcs$, $bdafcs$, $badfcs$, $abdfcs$, $dbacfs$, $bdacfs$, $badcfs$, $bacdfs$, $cabdfs$, $acbdfs$, $abdcfs$, $abcdfs$\\
\hline
$sabcdf$, $fbacds$, $dcbasf $&$(ab)$&$
sabcdf$, $asbcdf$, $abscdf$, $abcsdf$, $abfcds$, $abcfds$, $
bascdf$, $bacsdf$, $fbacds$, $bfacds$, $bafcds$, $bacfds$, $ 
cbasdf$, $bcasdf$, $cbafds$, $bcafds$, $
dcbasf$, $cdbasf$, $cbdasf$, $cbadsf$, $bcdasf$, $bcadsf$, $bacdsf$, $abcdsf$, $
dcbafs$, $cdbafs$, $cbdafs$, $cbadfs$, $bcdafs$, $bcadfs$, $bacdfs$, $abcdfs$\\
\hline
$sabcdf$, $fbacds$, $dbcasf$&$(ab)$&$
sabcdf$, $asbcdf$, $abcsdf$, $abscdf$, $abfcds$, $abcfds$, $
bascdf$, $bacsdf$, $fbacds$, $bfacds$, $bafcds$, $bacfds$, $
cbasdf$, $bcasdf$, $cbafds$, $bcafds$, $
cbdasf$, $cbadsf$, $dbcasf$, $bdcasf$, $bcdasf$, $bcadsf$, $bacdsf$, $abcdsf$, $
cbdafs$, $cbadfs$, $dbcafs$, $bdcafs$, $bcdafs$, $bcadfs$, $bacdfs$, $abcdfs$\\
\hline
$sabcdf$, $fbacds$, $cbadsf$, $dbacsf$&$(ab)$&$
sabcdf$, $asbcdf$, $abscdf$, $abcsdf$, $abfcds$, $abcfds$, $
bascdf$, $bacsdf$, $fbacds$, $bfacds$, $bafcds$, $bacfds$, $
cbasdf$, $bcasdf$, $cbafds$, $bcafds$, $
cbadsf$, $bcadsf$, $dbacsf$, $bdacsf$, $badcsf$, $bacdsf$, $abdcsf$, $abcdsf$, $
cbadfs$, $bcadfs$, $dbacfs$, $bdacfs$, $badcfs$, $bacdfs$, $abdcfs$, $abcdfs$\\
\hline
$sabcdf$, $fbacds$, $cbadsf$, $dabcsf$&$(ab)$&$
sabcdf$, $asbcdf$, $abscdf$, $abcsdf$, $abfcds$, $abcfds$, $
bascdf$, $bacsdf$, $fbacds$, $bfacds$, $bafcds$, $bacfds$, $
 cbasdf$, $bcasdf$, $ cbafds$, $bcafds$, $
cbadsf$, $bcadsf$, $badcsf$, $bacdsf$, $dabcsf$, $adbcsf$, $abdcsf$, $abcdsf$, $
cbadfs$, $bcadfs$, $badcfs$, $bacdfs$, $dabcfs$, $adbcfs$, $abdcfs$, $abcdfs$\\
\hline
$sabcdf$, $fbcads$, $dcbasf $&$(abc)$&$
sabcdf$, $asbcdf$, $abscdf$, $abcsdf$, $abcfds$, $
bcasdf$, $fbcads$, $bfcads$, $bcfads$, $bcafds$, $
bascdf$, $cbasdf$, $bacsdf$, $cbfads$, $cbafds$, $bacfds$, $
dcbasf$, $cdbasf$, $cbdasf$, $cbadsf$, $bcdasf$, $bcadsf$, $bacdsf$, $abcdsf$, $
dcbafs$, $cdbafs$, $cbdafs$, $cbadfs$, $bcdafs$, $bcadfs$, $bacdfs$, $abcdfs$\\
\hline
$sabcdf$, $fbcads$, $dbcasf $&$(abc)$&$
sabcdf$, $asbcdf$, $abscdf$, $abcsdf$, $abcfds$, $
bcasdf$, $fbcads$, $bfcads$, $bcfads$, $bcafds$, $
bascdf$, $cbasdf$, $bacsdf$, $cbfads$, $cbafds$, $bacfds$, $
cbdasf$, $cbadsf$, $dbcasf$, $bdcasf$, $bcdasf$, $bcadsf$, $bacdsf$, $abcdsf$, $
cbdafs$, $cbadfs$, $dbcafs$, $bdcafs$, $bcdafs$, $bcadfs$, $bacdfs$, $abcdfs$\\
\hline
$sabcdf$, $fbcads$, $dbacsf $&$(abc)$&$
sabcdf$, $asbcdf$, $abscdf$, $abcsdf$, $abcfds$, $
bcasdf$, $fbcads$, $bfcads$, $bcfads$, $bcafds$, $
bascdf$, $cbasdf$, $bacsdf$, $cbfads$, $cbafds$, $bacfds$, $
cbadsf$, $bcadsf$, $dbacsf$, $bdacsf$, $badcsf$, $bacdsf$, $abdcsf$, $abcdsf$, $
cbadfs$, $bcadfs$, $dbacfs$, $bdacfs$, $badcfs$, $bacdfs$, $abdcfs$, $abcdfs$\\
\hline
$sabcdf$, $fbcads$, $dabcsf$&$(abc)$&$
sabcdf$, $asbcdf$, $abscdf$, $abcsdf$, $abcfds$, $
bcasdf$, $fbcads$, $bfcads$, $bcfads$, $bcafds$, $
bascdf$, $cbasdf$, $bacsdf$, $cbfads$, $cbafds$, $bacfds$, $
cbadsf$, $bcadsf$, $badcsf$, $bacdsf$, $dabcsf$, $adbcsf$, $abdcsf$, $abcdsf$, $
cbadfs$, $bcadfs$, $badcfs$, $bacdfs$, $dabcfs$, $adbcfs$, $abdcfs$, $abcdfs$\\
\hline
$sabcdf$, $fabcds$, $dcbasf$&$id$&$sabcdf$, $asbcdf$, $bascdf$, $abscdf$, $cbasdf$, $bcasdf$, $bacsdf$, $abcsdf$, $dcbasf$, $cdbasf$, $cbdasf$, $cbadsf$, $bcdasf$, $bcadsf$, $bacdsf$, $abcdsf$, $fabcds$, $afbcds$, $bafcds$, $abfcds$, $cbafds$, $bcafds$, $bacfds$, $abcfds$, $dcbafs$, $cdbafs$, $cbdafs$, $cbadfs$, $bcdafs$, $bcadfs$, $bacdfs$, $abcdfs$\\
\hline
$sabcdf$, $fabcds$, $dbcasf$&$id$&$sabcdf$, $asbcdf$, $bascdf$, $abscdf$, $cbasdf$, $bcasdf$, $bacsdf$, $abcsdf$, $cbdasf$, $cbadsf$, $dbcasf$, $bdcasf$, $bcdasf$, $bcadsf$, $bacdsf$, $abcdsf$, $fabcds$, $afbcds$, $bafcds$, $abfcds$, $cbafds$, $bcafds$, $bacfds$, $abcfds$, $cbdafs$, $cbadfs$, $dbcafs$, $bdcafs$, $bcdafs$, $bcadfs$, $bacdfs$, $abcdfs$\\
\hline
$sabcdf$, $fabcds$, $dcabsf $&$id$&$sabcdf$, $asbcdf$, $bascdf$, $abscdf$, $bacsdf$, $cabsdf$, $acbsdf$, $abcsdf$, $bacdsf$, $dcabsf$, $cdabsf$, $cadbsf$, $cabdsf$, $acdbsf$, $acbdsf$, $abcdsf$, $fabcds$, $afbcds$, $bafcds$, $abfcds$, $bacfds$, $cabfds$, $acbfds$, $abcfds$, $bacdfs$, $dcabfs$, $cdabfs$, $cadbfs$, $cabdfs$, $acdbfs$, $acbdfs$, $abcdfs$\\
\hline
$sabcdf$, $fabcds$, $dacbsf$&$id$&$sabcdf$, $asbcdf$, $bascdf$, $abscdf$, $bacsdf$, $cabsdf$, $acbsdf$, $abcsdf$, $bacdsf$, $cadbsf$, $cabdsf$, $dacbsf$, $adcbsf$, $acdbsf$, $acbdsf$, $abcdsf$, $fabcds$, $afbcds$, $bafcds$, $abfcds$, $bacfds$, $cabfds$, $acbfds$, $abcfds$, $bacdfs$, $cadbfs$, $cabdfs$, $dacbfs$, $adcbfs$, $acdbfs$, $acbdfs$, $abcdfs$\\
\hline
$sabcdf$, $fabcds$, $cbadsf$, $dbacsf$&$id$&$sabcdf$, $asbcdf$, $bascdf$, $abscdf$, $cbasdf$, $bcasdf$, $bacsdf$, $abcsdf$, $cbadsf$, $bcadsf$, $dbacsf$, $bdacsf$, $badcsf$, $bacdsf$, $abdcsf$, $abcdsf$, $fabcds$, $afbcds$, $bafcds$, $abfcds$, $cbafds$, $bcafds$, $bacfds$, $abcfds$, $cbadfs$, $bcadfs$, $dbacfs$, $bdacfs$, $badcfs$, $bacdfs$, $abdcfs$, $abcdfs$\\
\hline
$sabcdf$, $fabcds$, $cbadsf$, $dabcsf$&$id$&$sabcdf$, $asbcdf$, $bascdf$, $abscdf$, $cbasdf$, $bcasdf$, $bacsdf$, $abcsdf$, $cbadsf$, $bcadsf$, $badcsf$, $bacdsf$, $dabcsf$, $adbcsf$, $abdcsf$, $abcdsf$, $fabcds$, $afbcds$, $bafcds$, $abfcds$, $cbafds$, $bcafds$, $bacfds$, $abcfds$, $cbadfs$, $bcadfs$, $badcfs$, $bacdfs$, $dabcfs$, $adbcfs$, $abdcfs$, $abcdfs$\\
\hline
$sabcdf$, $fabcds$, $dbacsf$, $cabdsf$&$id$&$sabcdf$, $asbcdf$, $bascdf$, $abscdf$, $bacsdf$, $cabsdf$, $acbsdf$, $abcsdf$, $dbacsf$, $bdacsf$, $badcsf$, $bacdsf$, $cabdsf$, $acbdsf$, $abdcsf$, $abcdsf$, $fabcds$, $afbcds$, $bafcds$, $abfcds$, $bacfds$, $cabfds$, $acbfds$, $abcfds$, $dbacfs$, $bdacfs$, $badcfs$, $bacdfs$, $cabdfs$, $acbdfs$, $abdcfs$, $abcdfs$\\
\hline
$sabcdf$, $fabcds$, $cabdsf$, $dabcsf $&$id$&$sabcdf$, $asbcdf$, $bascdf$, $abscdf$, $bacsdf$, $cabsdf$, $acbsdf$, $abcsdf$, $badcsf$, $bacdsf$, $cabdsf$, $acbdsf$, $dabcsf$, $adbcsf$, $abdcsf$, $abcdsf$, $fabcds$, $afbcds$, $bafcds$, $abfcds$, $bacfds$, $cabfds$, $acbfds$, $abcfds$, $badcfs$, $bacdfs$, $cabdfs$, $acbdfs$, $dabcfs$, $adbcfs$, $abdcfs$, $abcdfs$\\
\hline
\end{longtable}

\end{section}

\vfill
\begin{section}{References}
[1] K. J. Arrow. {\it Social choice and individual values}. Wiley, New York, 1st
edition, 1951.\\

[2] D. Black. {\it On the rationale of group decision-making}. Journal of Political Economy, 56:23–34, 1948.\\

[3] M. J. Condorcet et al. {\it Essai sur l'application de l'analyse {\`a} la probabilit{\'e} des d{\'e}cisions rendues {\`a} la pluralit{\'e} des voix}, volume 252. American
Mathematical Soc., 1972.\\

[4] V.I. Danilov and G. A. Koshevoy. {\it Maximal condorcet domains}. Order,
30(1):181–194, 2013.\\

[5] P.C. Fishburn. {\it Acyclic sets of linear orders: A progress report}. Social
Choice and Welfare, 19(2):431–447, 2002.\\

[6] {\'A} Galambos and V Reiner. {\it Acyclic sets of linear orders via the Bruhat
orders}. Social Choice and Welfare, 30(2):245–264, 2008.\\

[7] B. Monjardet. {\it Acyclic domains of linear orders: A survey}. In S. Brams,
W. Gehrlein, and F. Roberts, editors, The Mathematics of Preference, Choice and Order, Studies in Choice and Welfare, page 139–160. Springer, 2009.\\

[8] C. Puppe. {\it The single-peaked domain revisited: A simple global characterization}. Journal of Economic Theory, 176:55 – 80, 2018.\\

[9] C. Puppe and A. Slinko. {\it Condorcet domains, median graphs and the
single crossing property}. Economic Theory, pages 1–34, 2016.\\

[10] D Romero. {\it Variations sur l’effet Condorcet}. PhD thesis, Universit{\'e}
Joseph-Fourier-Grenoble I, 1978.\\

[11] A.K Sen. {\it A possibility theorem on majority decisions}. Econometrica,
34:491–499, 1966.\\

[12] A. Slinko. {\it Arrow’s single-peaked Condorcet domains}. University of
Auckland Algebra and Combinatorics Seminar, 2019.\\

[13] A. Slinko. {\it Condorcet domains satisfying Arrow’s single-peakedness}.
Journal of Mathematical Economics, 2019.
\end{section}

\end{document}